\def\namedlabel#1#2{\begingroup
 #2%
 \def\@currentlabel{#2}%
 \phantomsection\label{#1}\endgroup
}
\renewcommand{\PrintDOI}[1]{\href{http://dx.doi.org/\detokenize{#1}}{doi: \detokenize{#1}}%
	\IfEmptyBibField{pages}{, (to appear in print)}{}}
\theoremstyle{plain}
\newtheorem*{theorem*}{Theorem}
\newtheorem*{thmex*}{Theorem~\ref{example}}
\newtheorem*{thmasymp*}{Theorem~\ref{thmAsymp}}
\newtheorem{theorem}{Theorem}[section]
\newtheorem{remark}[theorem]{Remark}
\newtheorem{example}[theorem]{Example}
\theoremstyle{definition}
\newtheorem{definition}[theorem]{Definition}
\newcommand{\Z}{\mathbb{Z}}
\newcommand{\ben}{\begin{enumerate}}
\newcommand{\een}{\end{enumerate}}
\newcommand{\ed}{\end{document}}
\definecolor{rrr}{rgb}{.9,0,.1}
\definecolor{rr}{rgb}{.8,0,.3}
\newcommand{\tr}{\triangleright}
\title[Oriented Stuck Knots and State Sum Invariant]{RNA foldings, Oriented Stuck Knots and State Sum Invariants}
\author[J. Ceniceros]{Jose Ceniceros}
\address{Hamilton College, Clinton, NY, USA}
\email{jcenicer@hamilton.edu}
\author[M. Elhamadi]{Mohamed Elhamdadi}
\address{University of South Florida, Tampa, Florida, USA}
\email{emohamed@usf.edu}
\author[B. Magill]{Brendan Magill}
\address{Hamilton College, Clinton, NY, USA}
\email{bmagill@hamilton.edu}
\author[G. Rosario]{Gabriana Rosario}
\address{Hamilton College, Clinton, NY, USA}
\email{grosario@hamilton.edu}
\begin{document}

\maketitle

\begin{abstract}
We extend the quandle cocycle invariant to the context of  \emph{stuck links}.  More precisely, we define an invariant of stuck links by assigning Boltzmann weights at both classical and stuck crossings. As an application, we define a single-variable and a two-variable polynomial invariant of stuck links. Furthermore, we define a single-variable and two-variable polynomial invariant of arc diagrams of RNA foldings. We provide explicit computations of the new invariants.
\end{abstract}
\tableofcontents

\section{Introduction}\label{intro}
The RNA molecule is a long chain that consists of a sequence of the base A (adenine), C (cytosine), G (guanine), and U (uracil).  A and U can bond, and C and G can bond with each other.  Thus a folding of RNA corresponds to a word with a matching A-U and C-G for some pairs of letters.  We adopt a mathematical abstraction of an RNA molecule into what is called an \emph{arc diagram} of an RNA folding to study RNA foldings using methods of knot theory.  Here we use the colorings and the state sum invariants of stuck knots and links and apply them to study RNA foldings. State sum invariants of classical knots and links are powerful invariants used to distinguish knots in $3$-space and knotted surfaces in $4$-space; see \cite{CJKLS, CES1, CEGS}.  They use low dimensional cocycles of quandle cohomology as Boltzmann weights at crossings.  One can think of state sum invariants as enhancements of the coloring invariants.  For example, state sum invariants were used in \cite{CEGS} to prove the non-invertibility of a large family of knotted surfaces.  They were also used to determine the minimal triple point number of knotted surfaces \cite{Satoh-Shima}.  State sum invariants were extended to other contexts, such as singular knots in \cite{CCEH, CEM}.

In this article, we extend the quandle cocycle invariant to the context of  \emph{stuck links}.  More precisely, we define an invariant of stuck links by assigning Boltzmann weights at both classical and stuck crossings. As an application, we define a single-variable, two-variable, and three-variable polynomial invariant of stuck links. Furthermore, we define these polynomial invariants for arc diagrams of RNA foldings. Lastly, we provide explicit computations of the new invariants.

The article is organized as follows. Section~\ref{RSK} reviews the basics of stuck knots and links.  Section~\ref{quandles} reviews the algebraic structures needed in this article, namely the notions of quandles, singquandles, and stuquandles are reviewed, and the diagrams leading to their definitions are given. Some examples are also provided.  In Section~\ref{Colorings}, the notion of colorings of stuck links by stuquandles is presented as well as the notion of  \emph{the fundamental stuquandle} associated to a stuck link. Section~\ref{arcdiagrams} uses stuck knots and links to classify RNA foldings through the self-closure of arc diagrams.  In Section~\ref{Weight}, the cocycle invariant is extended to stuck links, and explicit examples are provided.  Section~\ref{CompStuck} gives a few computational examples of the state sum invariant.  The examples were facilitated by a \texttt{Python} and verified by hand computations.  In Section~\ref{CompArcDiagram}, we compute both the counting and state sum invariants of arc diagrams of RNA foldings, thus showing that the state sum invariant is a powerful enhancement of the coloring invariant.

\section{Review of Stuck Knots and Links}\label{RSK}
Stuck links can be considered a generalization of singular links. They were introduced in \cite{B}. Stuck knots and links have applications for modeling biomolecules.  The
relationship between stuck links and RNA folding was given in \cite{CEKL}. This article will follow the definitions and conventions used in \cite{B}. A diagram of a stuck link may contain classical crossings and stuck crossing. A stuck crossing is a singular crossing with additional information. Figure \ref{SX} depicts a  singular crossing, and Figure \ref{StuckX} depicts the two possible stuck crossings. 

\begin{figure}[h!]
    \centering
    \includegraphics[scale=.25]{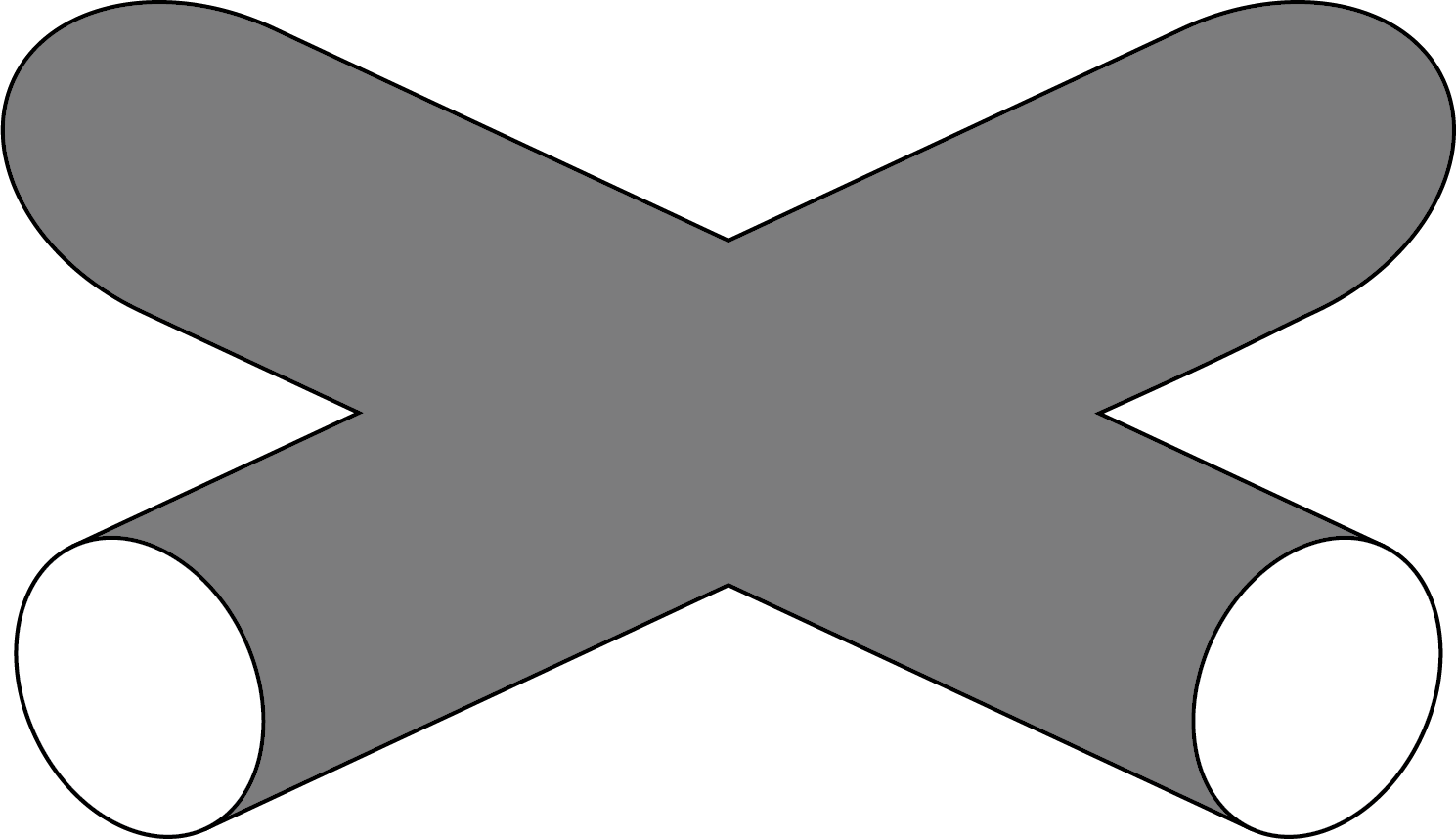}\hspace{2cm}
    \includegraphics[scale=1]{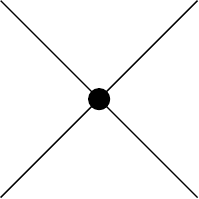}
    \caption{Singular crossing in a singular link (left) and a singular crossing in a singular link diagram (right).}
    \label{SX}
\end{figure}

\begin{figure}[h!]
    \centering
    \includegraphics[scale=.25]{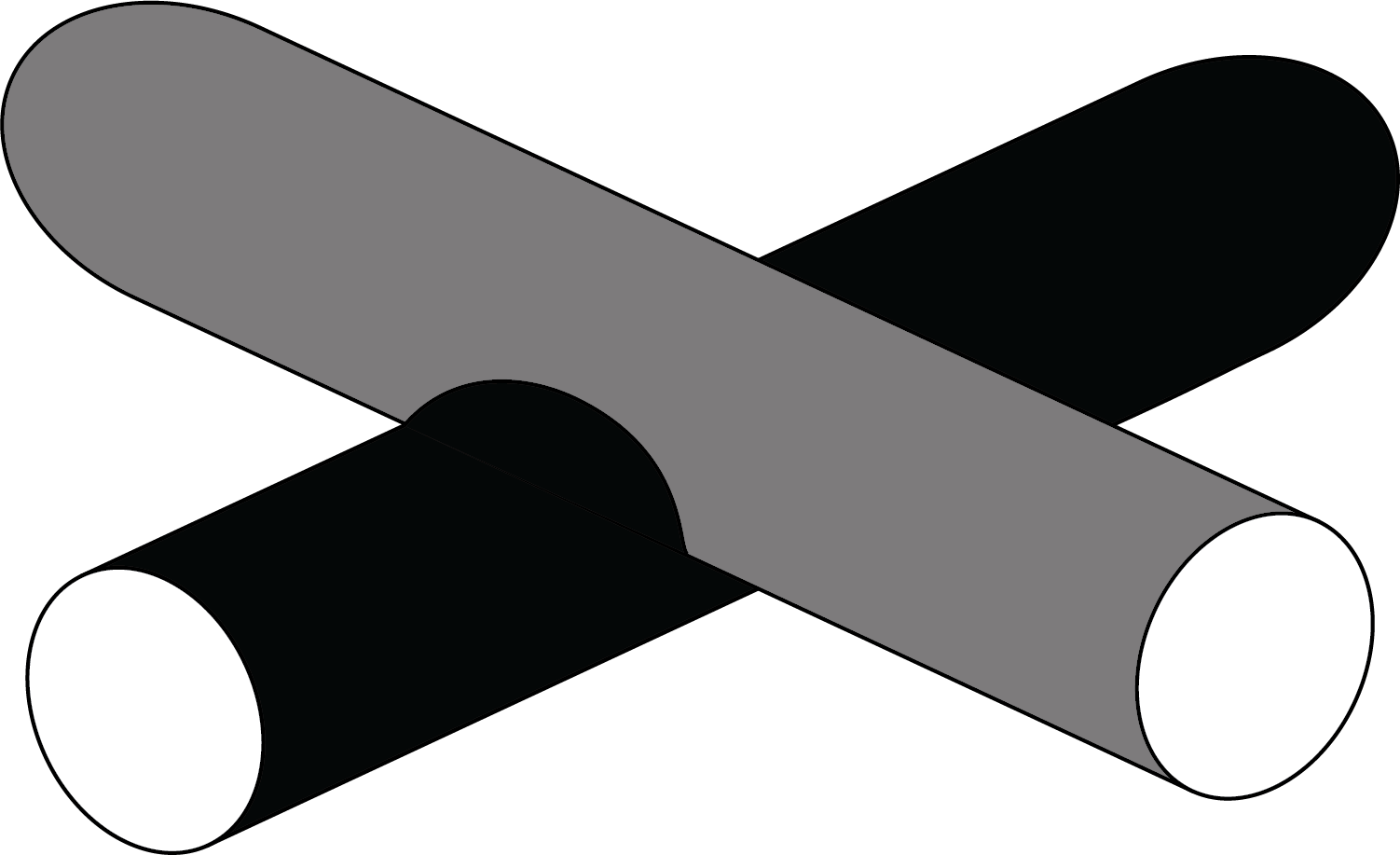}\hspace{2cm}
    \includegraphics[scale=1]{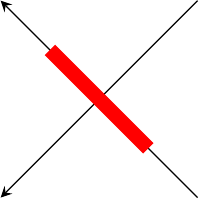}

\vspace{.5cm}
    \includegraphics[scale=.25]{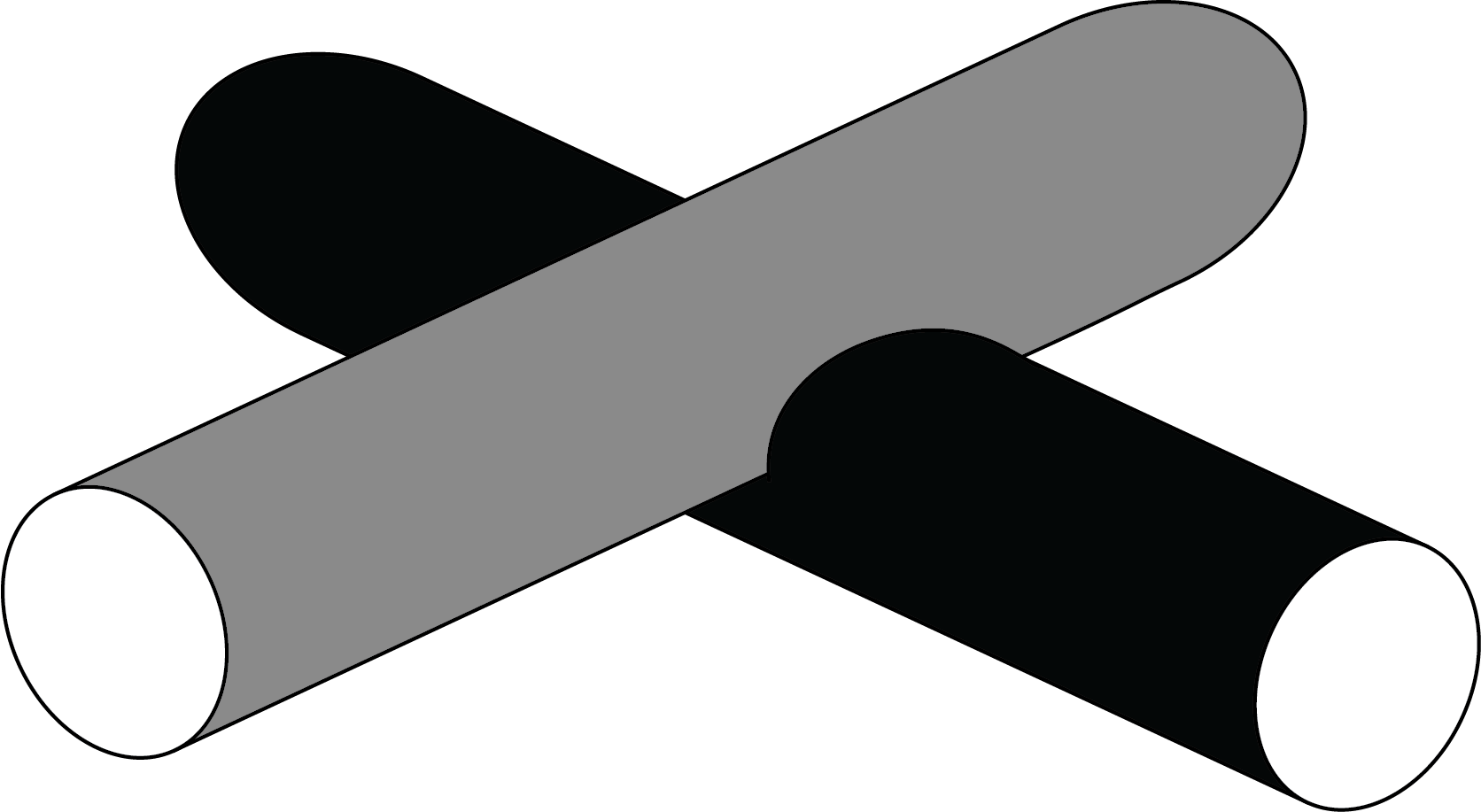}\hspace{2cm}
    \includegraphics[scale=1]{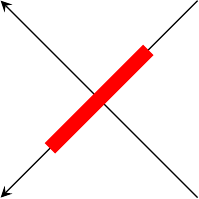}
\caption{Stuck crossings in a stuck link (left) and stuck crossings in a stuck link diagram (right).}
    \label{StuckX}
\end{figure}
In a stuck link diagram, we will use a thick bar on the over arc at a stuck crossing; see Figure \ref{StuckX}. Following the right-hand rule, we will refer to the top crossings in Figure \ref{StuckX} as positive stuck crossings, while the bottom crossings will be negative stuck crossings. 

In \cite{CEKL}, the set of moves in Figure~\ref{stuckrmoves} was shown to be a generating set of oriented stuck Reidemeister moves. The naming convention for the generalized Reidemeister moves in \cite{BEHY} has been adopted and modified to name the oriented stuck Reidemeister moves.
\begin{figure}[h!]
    \centering
\includegraphics[scale=1]{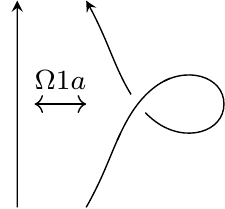} \hspace{2cm}
\includegraphics{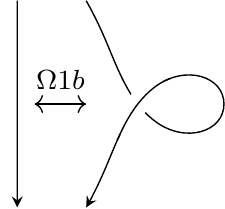}

\vspace{.5cm}
\includegraphics{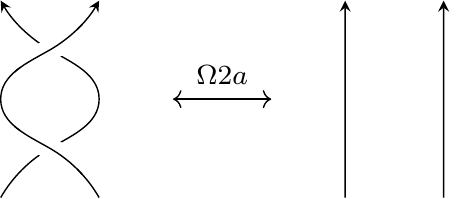}

\vspace{.5cm}
\includegraphics{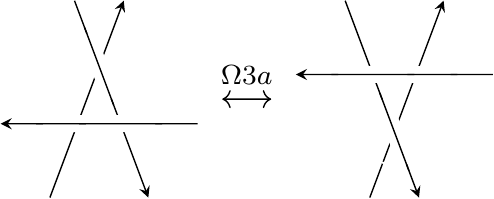}

\vspace{.5cm}
\includegraphics{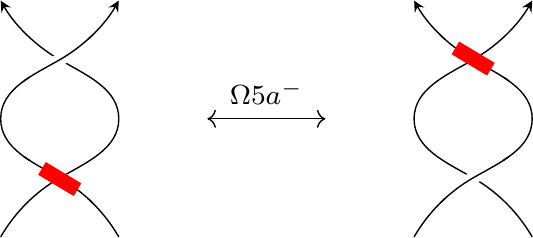}\hspace{2cm}
\includegraphics{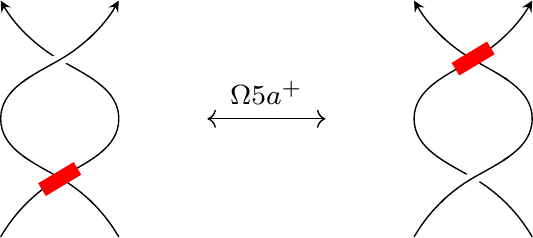}

\vspace{.5cm}
\includegraphics{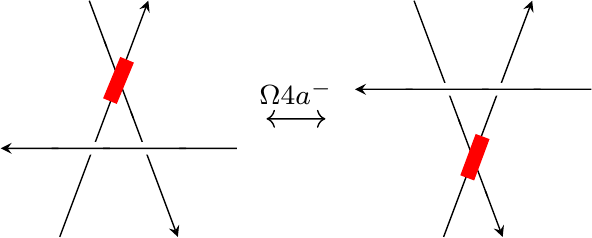}\hspace{2cm}
\includegraphics{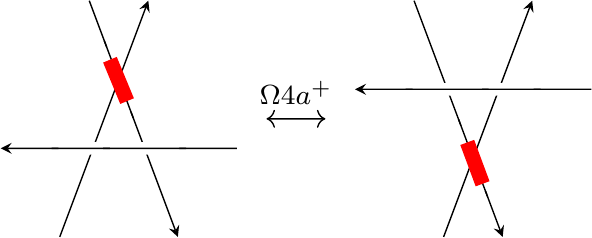}

\vspace{.5cm}
\includegraphics{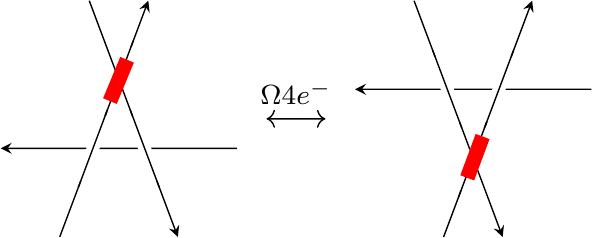}\hspace{2cm}
\includegraphics{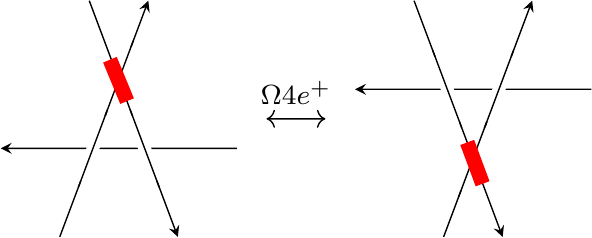}
    \caption{A generating set of oriented stuck Reidemeister moves.}
    \label{stuckrmoves}
\end{figure}

The stuck Reidemeister moves are helpful in the study of stuck links. Specifically, two stuck links diagrams are considered equivalent if one can be obtained from the other by a finite sequence of planar isotopies and oriented stuck Reidemeister moves. Therefore, similar to classical knots and links, a stuck link is an equivalence class of stuck link diagrams.

\section{Algebraic Structure for Stuck Knots} \label{quandles}
In this section, we will review the basics of several algebraic structures that are useful in the study of stuck links. These algebraic structures can be considered the axiomatization of the Reidemeister moves for classical, singular, and stuck knots, respectively.
We will first review quandles and singquandles; more details on quandles can be found in  \cites{EN, Joyce, Matveev} and singquandles in \cites{BEHY, CCE1, CCE2, CCEH}. 

\begin{definition}\label{quandle}  
A set $(X,\ast)$ is called a \emph{quandle} if the following three identities are satisfied.
\begin{eqnarray*}
& &\mbox{\rm (i) \ }   \mbox{\rm  For all $x \in X$,
$x* x =x$.} \label{axiom1} \\
& & \mbox{\rm (ii) \ }\mbox{\rm For all $y,z \in X$, there is a unique $x \in X$ such that 
$ x*y=z$.} \label{axiom2} \\
& &\mbox{\rm (iii) \ }  
\mbox{\rm For all $x,y,z \in X$, we have
$ (x*y)*z=(x*z)*(y*z). $} \label{axiom3} 
\end{eqnarray*}

Furthermore, a quandle is called a \emph{singquandle}, if there exists maps $R_1, R_2 : X\times X \rightarrow X$ satisfying the following conditions,
	\begin{eqnarray}
		R_1(x\,\bar{*}\,y,z)*y&=&R_1(x,z*y), \\
		R_2(x\,\bar{*}\,y, z) & =&  R_2(x,z*y)\,\bar{*}\,y, \\
	      (y\,\bar{*}\,R_1(x,z))*x   &=& (y*R_2(x,z))\,\bar{*}\,z, \\
R_2(x,y)&=&R_1(y,x*y),\\
R_1(x,y)*R_2(x,y)&=&R_2(y,x*y).  	
\end{eqnarray}	
\end{definition} 
\begin{remark}
From Axiom (ii) of Definition~\ref{quandle} we can write the element $x$ as $z \,\bar{*}\, y = x$. Notice that this operation $\bar{*}$ defines a quandle structure on $X$.    
\end{remark}
The following are two typical examples of quandles.
 \begin{enumerate}
     \item Alexander quandles given by $x*y=tx+(1-t)y$ on any $\Lambda=\mathbb{Z}[t,t^{-1}]$-module.\\
     \item Conjugation quandles defined on a group $G$ with $x*y=yxy^{-1}$.
 \end{enumerate}  
The following two examples were given in \cite{BEHY}.
\begin{example}\label{generalizedaffinesingquandle}
Let $a \in \mathbb{Z}_n$ be any invertible element. The pair $(\mathbb{Z}_n, *)$ with operation $x*y = ax+(1-a)y$ is a quandle.  Furthermore, if we have $R_1(x,y) = bx + (1 - b)y$ and $R_2(x,y) = a(1 - b)x + (1 - a(1 - b))y$ where $b$ is any element of $\mathbb{Z}_n$, then $(X,*,R_1,R_2)$  is an oriented singquandle.
\end{example}
The following example gives many singquandle structures defined on groups.
\begin{example}
	Consider the conjugation quandle, $x*y=y^{-1}xy$, on a non-abelian group  $X=G$.  Then $(X, *, R_1, R_2)$ is a singquandle, where for all $x, y \in G$ the maps $R_1$ and $R_2$ are given by:
	\begin{enumerate}
    	\item  $R_1(x,y)=x$ and $R_2(x, y)=y$.
        \item  $R_1(x,y)=xyxy^{-1}x^{-1}$ and $R_2(x, y)=xyx^{-1}$.
        \item  $R_1(x,y)=y^{-1}xy$ and $R_2(x, y)=y^{-1}x^{-1}yxy$.
        \item  $R_1(x,y)=xy^{-1}x^{-1}yx,$ and $R_2(x,y)=x^{-1}y^{-1}xy^2$.
        \item  $R_1(x,y)=y(x^{-1}y)^n$ and $R_2(x, y)=(y^{-1}x)^{n+1}y$, where $n \geq 1$.
    \end{enumerate}
\end{example}   

In this article, we will focus on stuck links and stuquandle structures.  
Now we recall the definition of stuquandle from \cite{CEKL}.

\begin{definition}\label{stuquandle}
	Let $(X, *,R_1,R_2)$ be a singquandle.  Let $R_3$ and $R_4$ be maps from $X \times X$ to $X$.  If $R_3$ and $R_4$ satisfy the following axioms for all $x,y,z \in X$:
	\begin{eqnarray}
		R_3(y,x)*R_4(y,x)&=&R_4(x*y,y),\label{eq6}\\
	    R_4(y,x)&=&R_3(x*y,y),\label{eq7}\\
        R_3(y*x,z)& =&R_3(y,z\,\bar{*}\,x)*x   ,\label{eq8}\\
		R_4(y,z\,\bar{*}\,x)&=&R_4(y*x,z)\,\bar{*}\,x, \label{eq9} \\
	    (x*R_4(y,z))\,\bar{*}\,y &=& (x\,\bar{*}\,R_3(y,z))*z,\label{eq10}
    \end{eqnarray}	
    then the 6-tuple $(X, *, R_1, R_2, R_3, R_4)$ is called an \emph{oriented stuquandle}.
\end{definition}
\noindent Since we will only consider oriented stuquandles, we will refer to them as stuquandles. The following two examples were given in \cite{CEKL}.

\begin{example}\label{generalizedaffinestuquandle}
Let $X=\mathbb{Z}_n$ and
\begin{eqnarray*}
x*y &=& ax+(1-a)y, \\
x \,\bar{\ast}\, y &=& a^{-1}x+(1-a^{-1})y,\\
R_1(x,y) &=& bx + (1 - b)y,\\
R_2(x,y) &=& a(1 - b)x + (1 - a(1 - b))y, \\
R_3(x,y)&=&(1-e)x+ey, \\
R_4(x,y)&=&(1-a(1-e))x+a(1-e)y,
\end{eqnarray*}
where $a$ is any invertible element of $\mathbb{Z}_n$ and any $b,e \in \mathbb{Z}_n$. Then $(X,*,R_1,R_2,R_3,R_4)$ is a stuquandle.   
\end{example}
One can see that the following is a stuquandle as a consequence of Example \ref{generalizedaffinestuquandle}. 

\begin{example}
Let $\Lambda=\mathbb{Z}[t^{\pm 1},v]$  and let $X$ be a $\Lambda$-module. Let
\begin{eqnarray*}
x\ast y &=& tx+(1-t)y,\\
R_1(x,y) &=& \alpha(a,b,c)x+(1-\alpha(a,b,c))y,\\
R_2(x,y) &=& t[1  - \alpha(a,b,c)] x + [1 -t(1-  \alpha(a,b,c))] y,\\
R_3(x,y) &=& (1-\alpha(d,e,f))x+\alpha(d,e,f)y,\\
R_4(x,y) &=&  [1 -t(1-  \alpha(d,e,f))] x + t[1  - \alpha(d,e,f)] y ,\\
\end{eqnarray*}
where $\alpha(a,b,c)=at+bv+ctv$ and $\alpha(d,e,f)=dt+fv+etv$. Then $(X,*,R_1,R_2,R_3,R_4)$ is an oriented stuquandle, which we call an \textit{Alexander oriented stuquandle}. 
\end{example}

\section{Colorings of Stuck Knots by Stuquandles}\label{Colorings}
The idea of coloring a stuck link by a stuquandle was introduced in \cite{CEKL}.  In this section, we recall the notion of the \emph{fundamental stuquandle} associated to a stuck link and use it to define the colorings of stuck links by stuquandles.  Additionally, we provide explicit computations at the end of this section.

\begin{definition}\label{colorrule}
    Let $(X, *, R_1,R_2,R_3,R_4)$ be a stuquandle and $L$ an oriented stuck link diagram. Then a coloring of $L$ by $X$ is an assignment of elements of $X$ to the semiarcs of $L$ at stuck crossings and to the arcs of $L$ at classical crossings obeying the coloring rules in Figure~\ref{newrule}.

\begin{figure}[H]
\centering
\vspace{.5cm}
\includegraphics{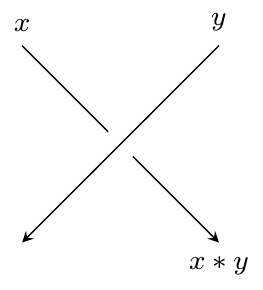}\hspace{2cm}
\includegraphics{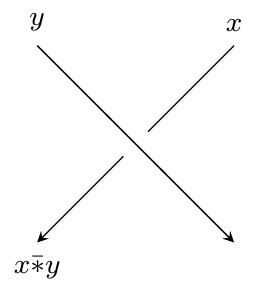}

\vspace{.5cm}
\includegraphics{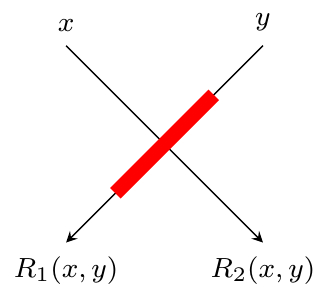}\hspace{2cm}
\includegraphics{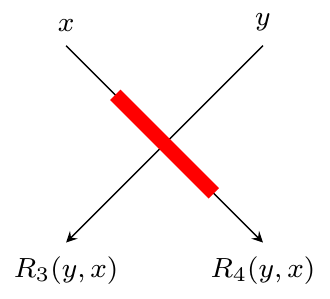}
\caption{Coloring rules at classical and stuck crossings.}
\label{newrule}
\end{figure}
\end{definition}

The stuquandle axioms in Definition \ref{stuquandle} are motivated by the oriented stuck Reidemeister moves for stuck knot diagrams using the coloring rules presented in Figure~\ref{newrule}.

Given a stuck link $L$.  The notion of its fundamental stuquandle $(\mathcal{STQ}(L), \ast, R_1, R_2,R_3, R_4)$ was introduced in \cite{CEKL}.  For more details, the reader is advised to consult \cite{CEKL}.
Furthermore, given a finite stuquandle $(X, \tr, R'_1,R'_2, R'_3, R'_4)$, the set of stuquandle homomorphisms from $\mathcal{STQ(L)}$ to $X$, denoted by $\textup{Hom}(\mathcal{STQ}(L), X)$, was used to define a computable invariant. Specifically, by computing the cardinality $\vert\textup{Hom}(\mathcal{STQ}(L), X) \vert$, we obtain an integer value invariant called the \emph{stuquandle counting invariant}, denoted by $Col_X(L)$. Note that each $f \in \textup{Hom}(\mathcal{STQ}(L), X)$ can be thought of as assigning an element of $X$ to each arc in $L$ at a classical crossing and to each semiarc in $L$ at each stuck crossing satisfying the coloring rules in Figure~\ref{newrule}. The following is an example of a coloring of a stuck knot by a stuquandle.

\begin{example}
Let $X = \Z_6$ be the stuquandle with operations $*,R_1,R_2,R_3,R_4: X\times X \to X$ defined by $x*y = 5x+2y=$ $x\,\bar{*}\,y$ and maps $R_1(x,y) = 4x+3y$, $R_2(x,y) = 3x+4y$, $R_3(x,y) = y$, and $R_4(x,y) = x$. By Example \ref{generalizedaffinestuquandle} with $a = 5, b= 4,$ and $e = 1$, we obtain that $(X,*,R_1,R_2,R_3,R_4)$ is an stuquandle. 

\begin{center}
\includegraphics{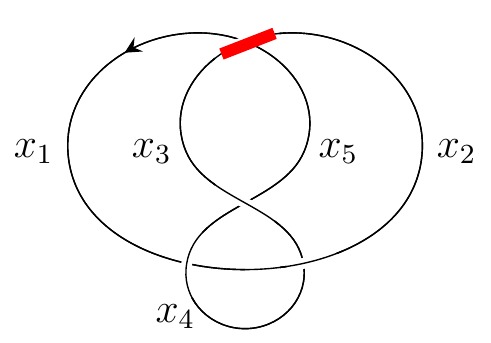}
\end{center}
From the stuck knot diagram, we obtain the following coloring equations,
\begin{align*}
    x_1&= R_3(x_5,x_2),   \\
    x_2&=x_1\,\bar{*}\,x_4,  \\
    x_3&= R_4(x_5,x_2),  \\
    x_4&= x_3\,\bar{*}\,x_2,  \\
    x_5&= x_4*x_3.
\end{align*}
Using the stuquandle operations, we obtain the following equations,
\begin{align*}
    x_1&=  x_2 ,\\
    x_2&=  5x_1+2x_4 ,\\
    x_3&=  x_5, \\
    x_4&=  5x_3+2x_2, \\
    x_5&= 5x_4+2x_3.
\end{align*}
 This system of equations simplifies to give the $12$ elements solution set $\{(x,x,4x,4x,4x), (x,x,4x+3, 4x+3, 4x+3), \; x\in \mathbb{Z}_6\}$.  Each element of the set corresponds to a coloring of the diagram by our stuquandle. 
 Each coloring can be identified with a $5$-tuple $(f(x_1), f(x_2), f(x_3), f(x_4), f(x_5))$, where $f \in \textup{Hom}(\mathcal{STQ}(L),X)$. Therefore, for this stuck Figure-8 knot $3^{k-}_1$, 
\[Col_X(3^{k-}_1)=\vert \textup{Hom}(\mathcal{STQ}(3^{k-}_1), X)\vert = 12.\]
Thus, we obtain the following explicit colorings:
\begin{align*}
\textup{Hom}(\mathcal{STQ}(3^{k-}_1), X) =  \{&(0,0,0,0,0),(3,3,0,0,0),(1,1,1,1,1),(4,4,1,1,1),\\ &(2,2,2,2,2), (5,5,2,2,2), (0,0,3,3,3), (3,3,3,3,3),\\ &(1,1,4,4,4), (4,4,4,4,4), (2,2,5,5,5), (5,5,5,5,5)\}.
\end{align*}
\end{example}
In Section~\ref{Weight}, we will introduce a method to strengthen the stuquandle counting invariant by extracting further information from $\textup{Hom}(\mathcal{STQ}(L), X)$. 

\section{Classifying RNA foldings Using Stuck Knots and Links} \label{arcdiagrams}

Arc diagrams have been used to study RNA foldings; see \cite{KM,TLKL}. As described in \cite{KM}, an RNA molecule is a chain consisting of the bases adenine (A), cytosine (C), uracil (U), and guanine (G). Furthermore, (A) can form a bond with (U) and (C) can form a bond with (G). Therefore, an RNA molecule is a linear sequence of the letters (A), (C), (G), and (U), and an RNA folding is a possible pairing of a given sequence of bases. An RNA folding can be further abstracted by an arc diagram. The following example from \cite{KM} provides an RNA folding and the corresponding arc diagram. 

\begin{example}
Consider the chain $\cdots CCCAAAACCCCCUUUUCCC\cdots$ which has a corresponding folding given in Figure~\ref{RNAchain}.

\begin{figure}[H]
    \centering
    \includegraphics[scale=.5]{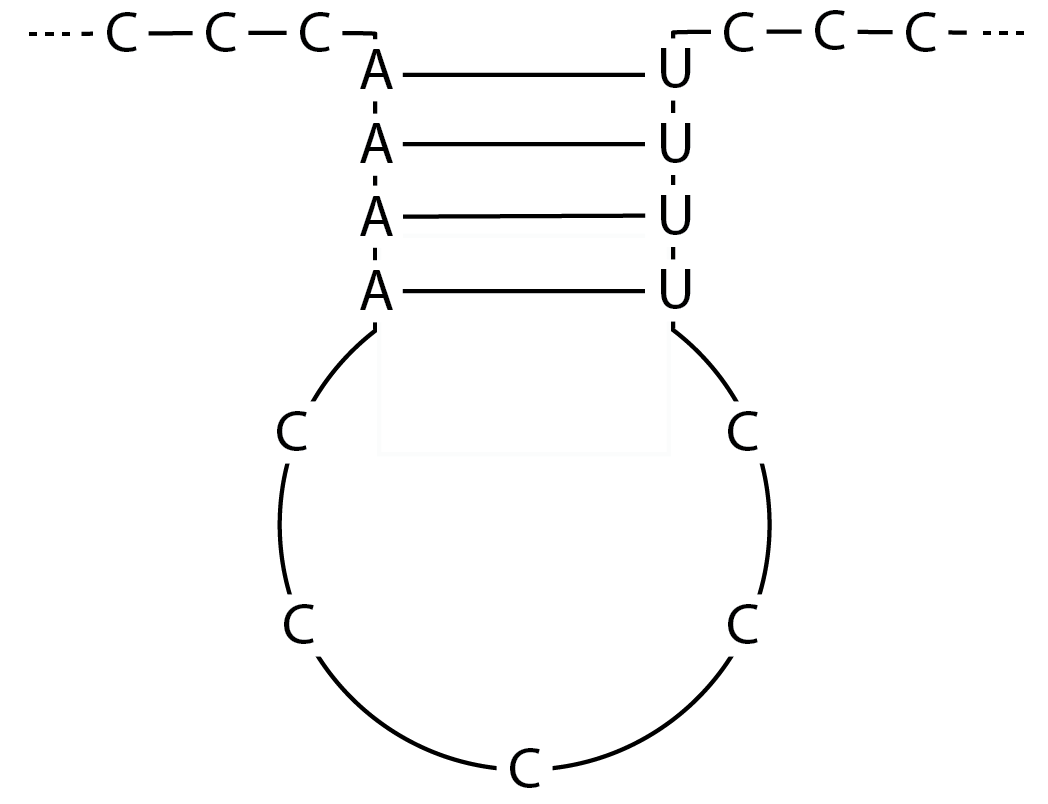}
    \hspace{.5cm}
    \includegraphics[scale=.43]{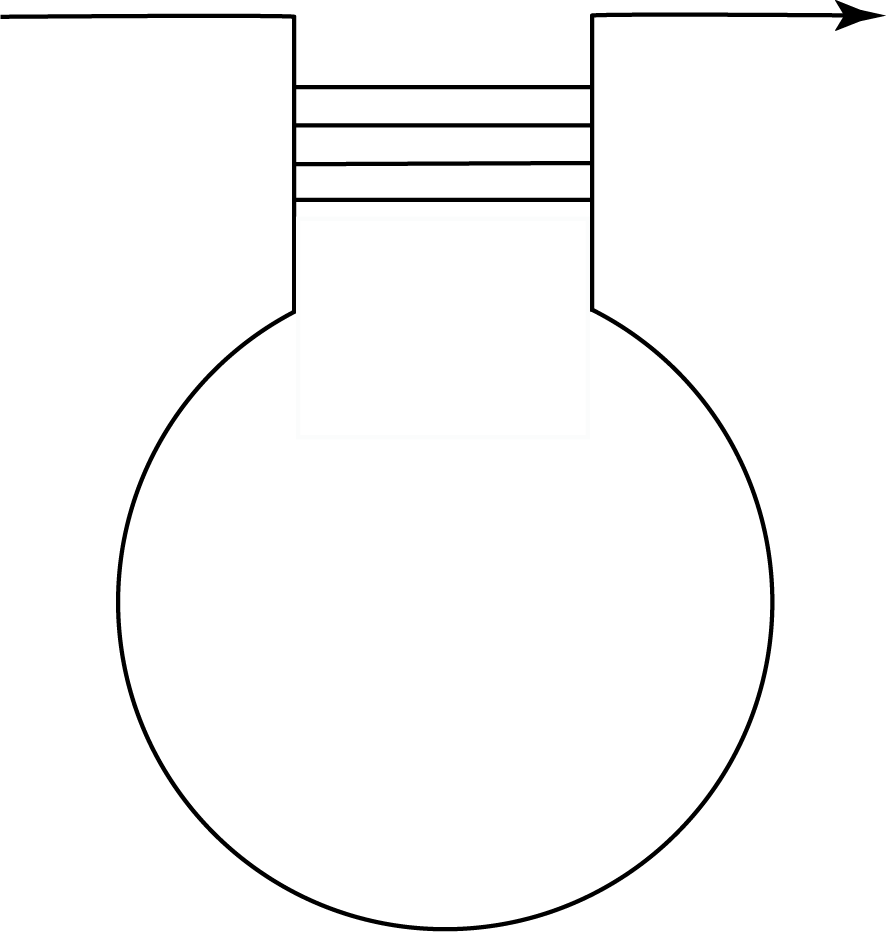}
    \caption{RNA folding and corresponding arc diagram.}
    \label{RNAchain}
\end{figure}

We will use the convention introduced in \cite{B} of representing the bonds with a solid gray rectangle, see Figure~\ref{grayarcdiagram}.
\begin{figure}[H]
    \centering
    \includegraphics[scale=.5]{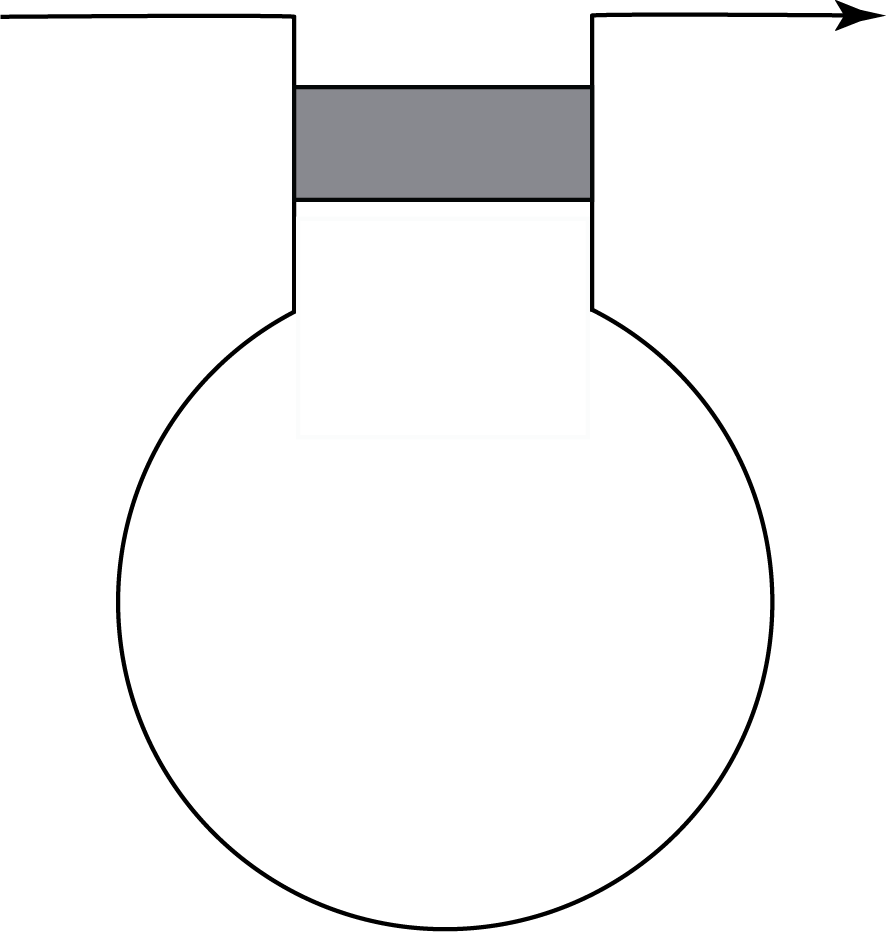}
    \caption{Modified arc diagram.}
    \label{grayarcdiagram}
\end{figure}

\end{example}

Stuck links diagrams provide an alternate way to study the topology of RNA foldings. Specifically, in \cite{B}, a transformation was defined to obtain a stuck link diagram from an arc diagram and vice versa; see Figure~\ref{transformation}. 

\begin{figure}[H]
    \centering
    \includegraphics[scale=.5]{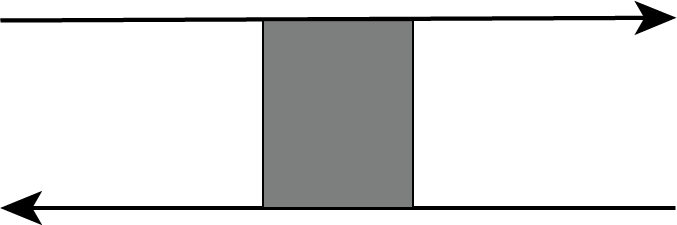}
    \hspace{.6cm}
    \includegraphics{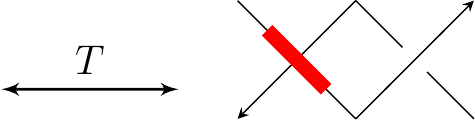}
    \hspace{.6cm}
    \includegraphics{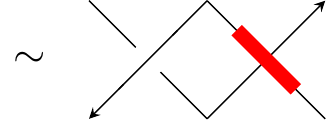}

    \vspace{1.5cm}
    \includegraphics[scale=.5]{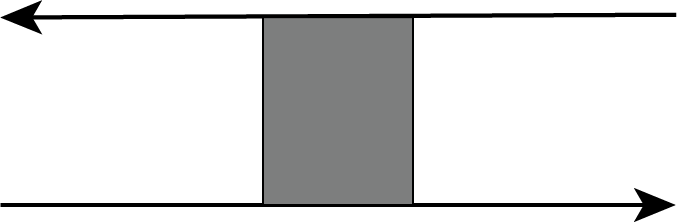}
    \hspace{.6cm}
    \includegraphics{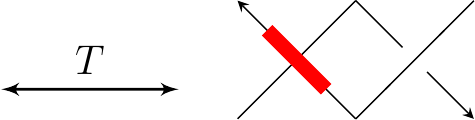}
    \hspace{.6cm}
    \includegraphics{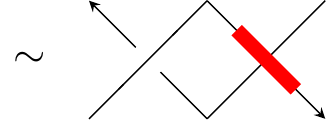}       
\vspace{2cm}

    \includegraphics{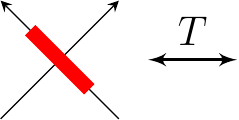}
\hspace{.2cm}
    \includegraphics[scale=.5]{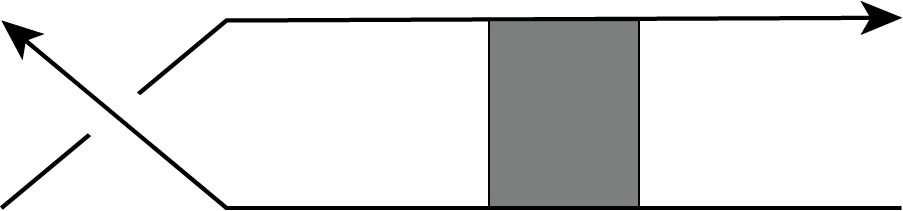}
    \put(10,10){$\sim$}
    \includegraphics[scale=.5]{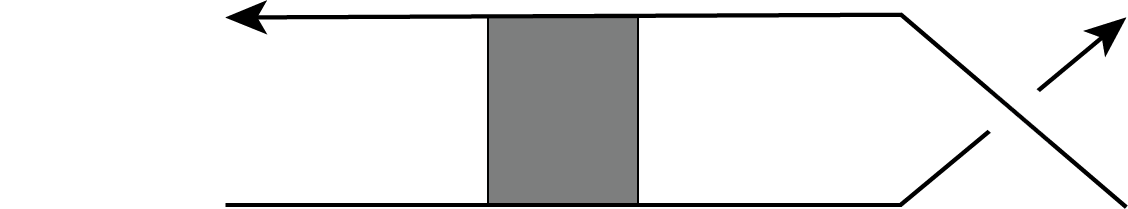}
    \vspace{1.5cm}
    
\includegraphics{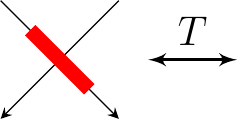}
\hspace{.2cm}    
    \includegraphics[scale=.5]{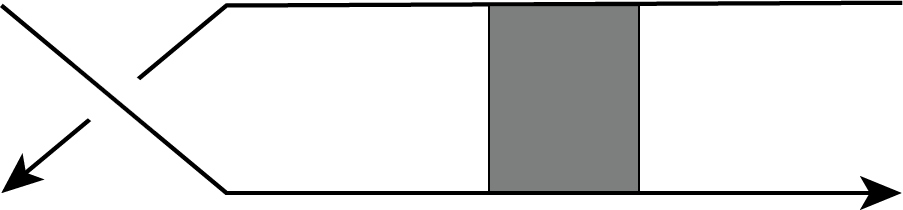}
        \put(10,10){$\sim$}
    \includegraphics[scale=.5]{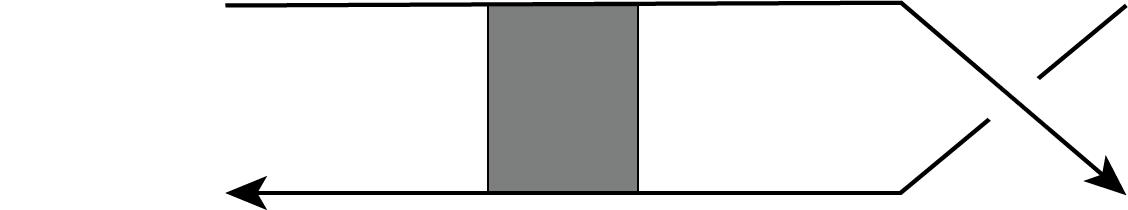}
    \caption{Transformation between arc diagram of RNA folding and stuck diagrams}
    \label{transformation} 
\end{figure}

Additionally, an effective and computable invariant of RNA folding called the \emph{stuquandle counting invariant} was defined in \cite{CEKL}. Before we provide an example of the stuquandle counting invariant of an RNA folding, we note that we will follow the convention set up in \cite{B} of the self-closure of arc diagram to obtain an associated stuck link. By the self-closure, we mean that each strand in an arc diagram connects to itself; see Figure~\ref{Closure}.

\begin{figure}[H]
    \centering
    \includegraphics[scale=.5]{Figure23}
    \hspace{1.5cm}
    \includegraphics[scale=.5]{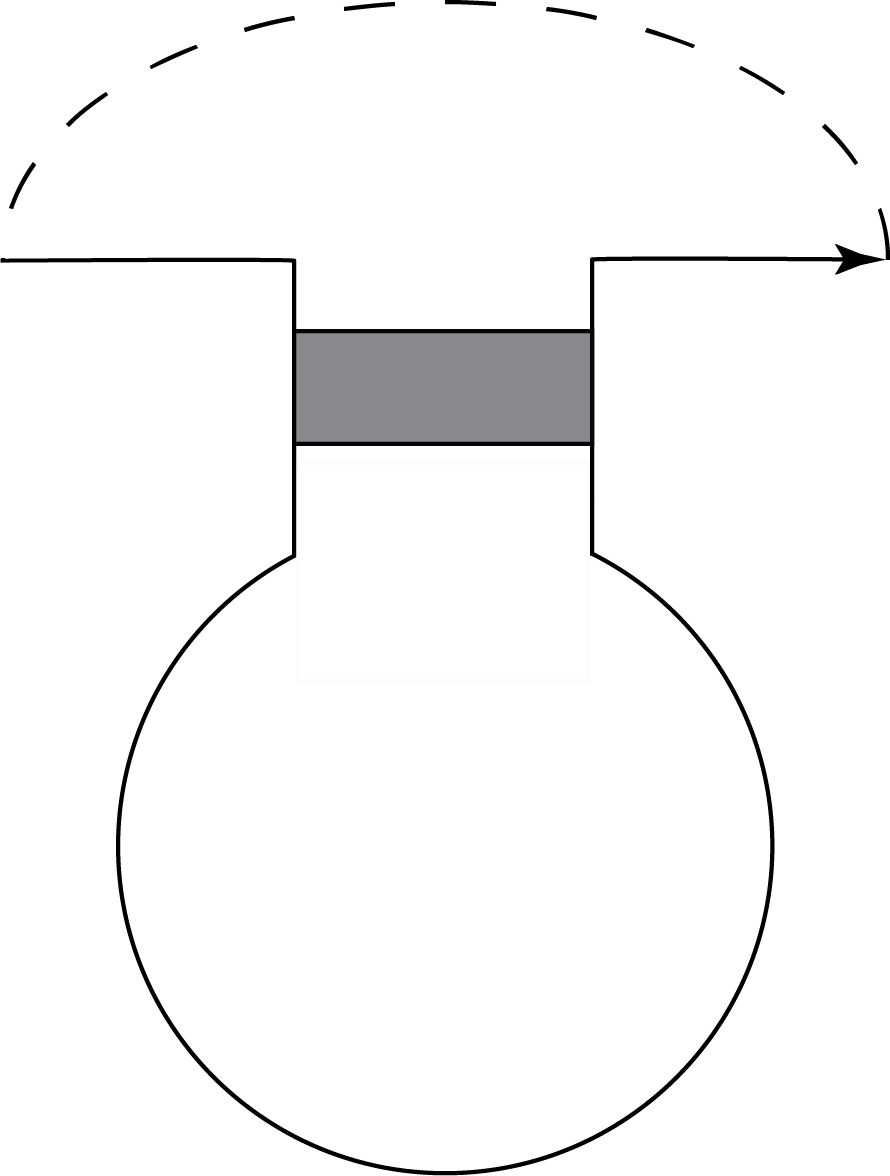}
    \put(-160,50){self-closure}
    \includegraphics{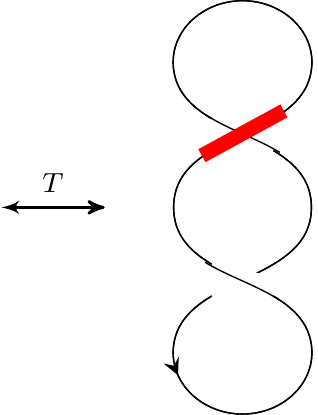}
\vspace{1cm}

    \includegraphics[scale=.5]{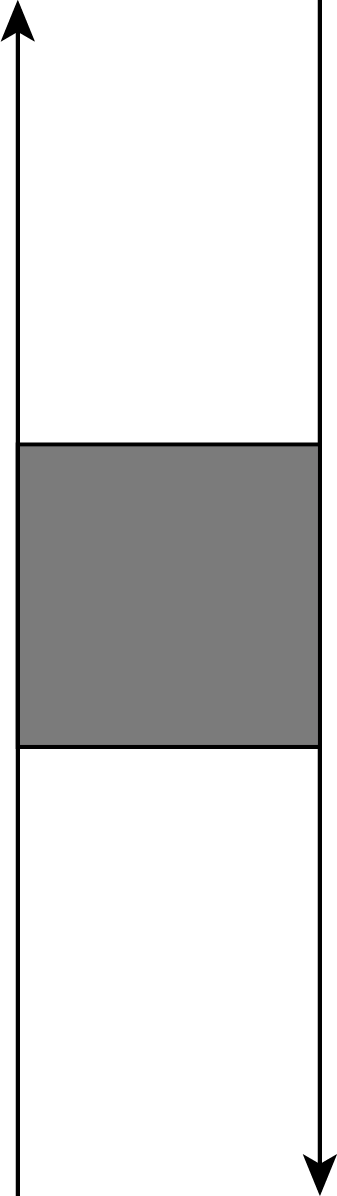}
    \hspace{2.5cm}
    \includegraphics[scale=.5]{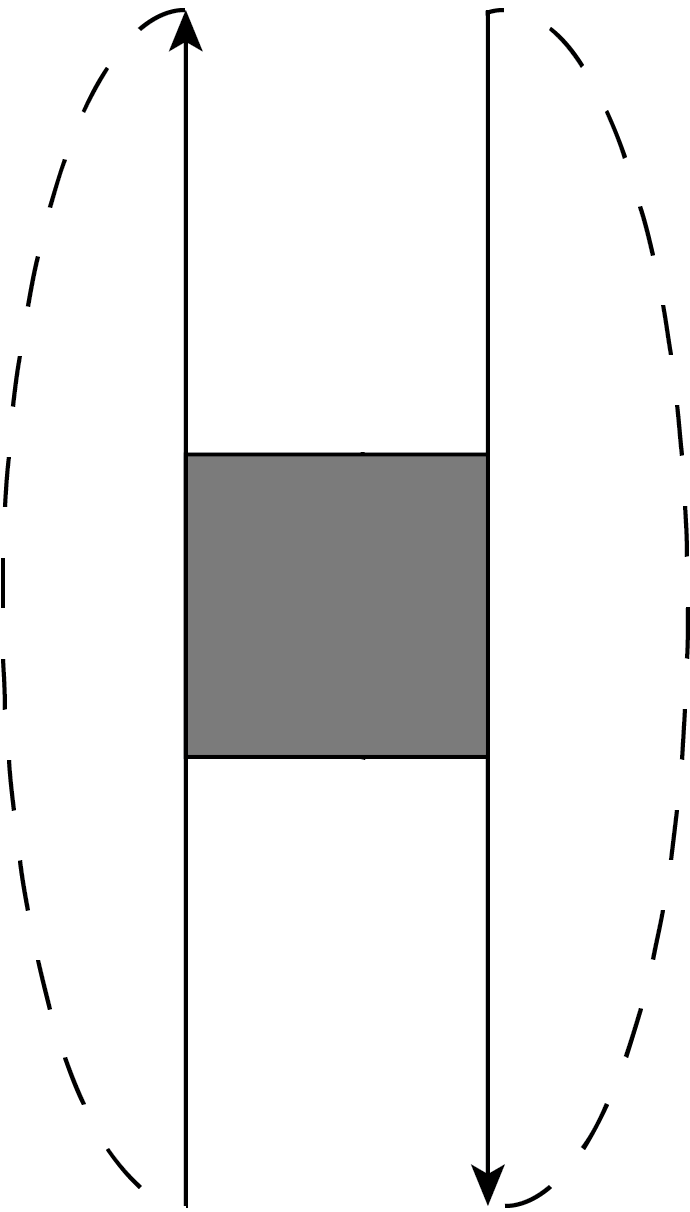}
    \put(-150,60){self-closure}
    \hspace{.5cm}
\includegraphics{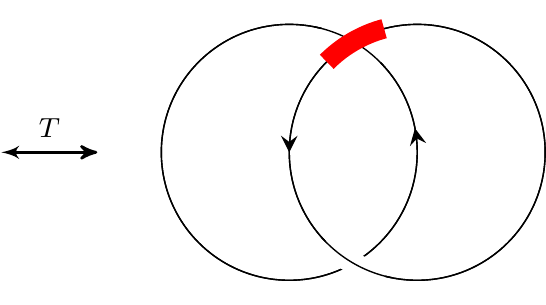}
    \caption{Arc diagram of RNA folding (left), an arc diagram of RNA folding with gray stripe (middle), and a stuck link diagram (right). }
    \label{Closure}
\end{figure}

Using the transformation, $T$, and taking the self-closure of a given arc diagram of an RNA folding, we obtain an associated stuck link diagram. Furthermore, we can compute the stuquandle counting invariant of the RNA folding by computing the stuquandle counting invariant using the associated stuck link diagram.

\begin{example}
Let $X=\mathbb{Z}_{6}$ be the stuquandle with operation defined by $x \ast y = 5x+2y$ and maps $R_1(x,y) = 4x+3y$, $R_2(x,y)=3x+4y$, $R_3(x,y)=x$, and $R_4(x,y)=2x+5y$. By Example~\ref{generalizedaffinestuquandle} with $a=5$, $b=4$ and $e=0$ we obtain that $(X,*,R_1,R_2,R_3,R_4)$ is an oriented stuquandle. Consider the following arc diagram of an RNA folding.

\[
\includegraphics[scale=.5]{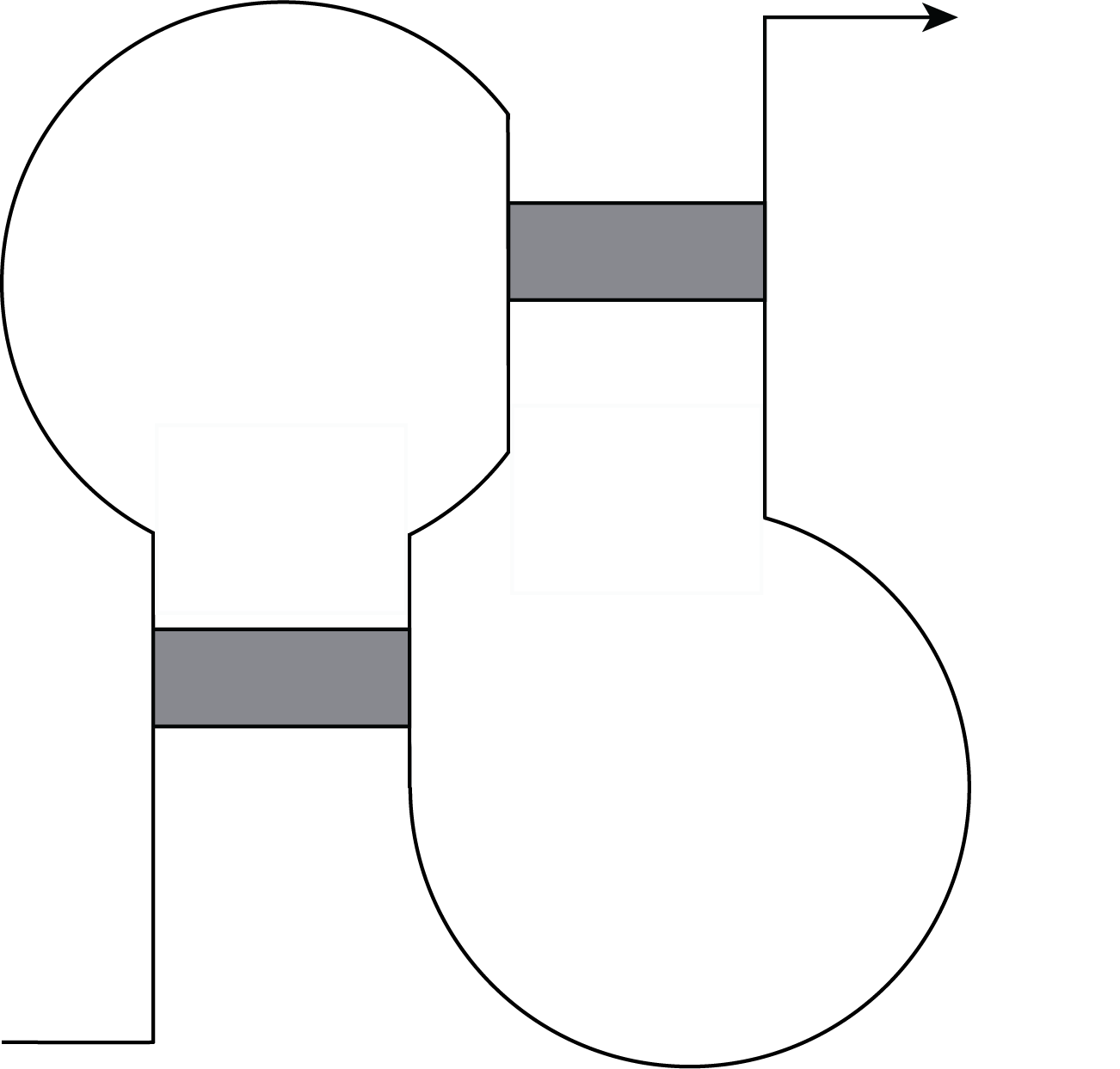}
\hspace{1cm}
\includegraphics{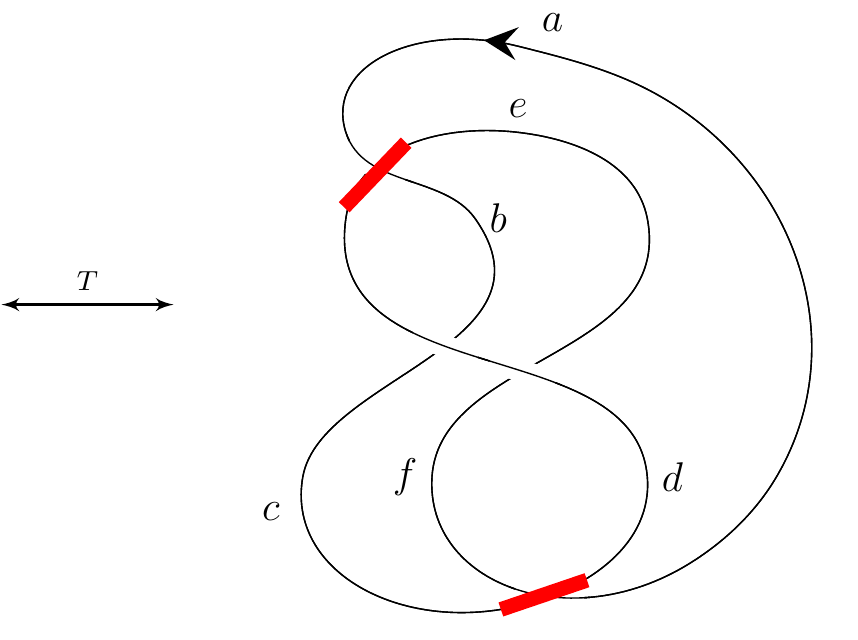}
\]
Using the corresponding stuck knot diagram, we obtain the following coloring equations,
\begin{eqnarray*}
a &=& R_3(f,c),\\
b &=& R_3(a,d),\\
c &=& b \ast d,\\
d &=& R_4(f,c),\\
e &=& R_4(a,d),\\
f &=& e \ast d.
\end{eqnarray*}
 One then gets $a=b=f$, $c=2d-a$, $e=2a-d$ and $3(d-a)=0$.  The equation $3(d-a)=0$ (modulo $6$) implies that $d=a$, or $d=a+2$ or $d=a+4$. Thus, we obtain the \emph{three} types of solutions:
\begin{enumerate}
    \item $a=b=c=d=e=f$;
    \item $a=b=f,c=a+4, d=a+2, e=a-2$;
    \item $a=b=f, c=a+2, d=a+4, e=a-4$.
\end{enumerate}
Therefore, this RNA folding arc has $18$ solutions using the above stuquandle.
\end{example}

Although the stuquandle counting invariant is an effective invariant of RNA foldings, it is not a complete invariant. There are distinct RNA foldings with the same stuquandle counting invariant values. In the next section, we will enhance the stuquandle counting invariant to obtain additional information about an RNA folding to define a more robust invariant. See Section~\ref{CompArcDiagram} for examples of RNA foldings with the same stuquandle counting invariant but distinguished by the enhanced invariant.


\section{Boltzmann Weights and State Sum Invariant}\label{Weight}
 In this section, we will extract additional information about the stuck link, $L$, from $\textup{Hom}(\mathcal{STQ}(L),X)$. In the case that $L$ is a classical link with diagram $D$ and $(X, \ast)$ be a quandle. One very successful idea is to associate at each crossing in $D$ the value of a function $\phi: X \times X \rightarrow A$ (called a \emph{Boltzmann weight}), where $A$ is an abelian group. Next, for each coloring, $f\in \textup{Hom}(\mathcal{Q}(L),X)$, of a link by the quandle, $X$, we take the product in the group $A$ of the values of $\phi$ over the crossings in $D$ colored by $X$. This process gives us what is known as the \emph{cocycle invariant} of the link $L$. To obtain an invariant of knots and links, the function $\phi$ must be chosen such that the Boltzmann weight of $f$ remains unchanged by Reidemeister moves (in other words, $\phi$ satisfies the $2$-cocycle condition).
Several variations of this idea have been defined; see \cites{CJKLS, CES1, AG, EN}. Since we are interested in stuquandles, we will consider the quandle cohomology defined in \cite{CJKLS}. Given a quandle $(X,\ast)$ and abelian group $A$, a function $\phi: X \times X \rightarrow A$ is a \emph{quandle 2-cocycle} if for all $x,y,z \in X$, $\phi$ satisfies the following two conditions
\begin{equation}
 \phi(x,y) + \phi( x \ast y, z) = \phi(x,z) + \phi(x \ast z, y \ast z), \label{cocycle1}
 \end{equation}
 and
 \begin{equation}
 \phi(x,x)=0. \label{cocycle2}
 \end{equation}
Next, we will extend the quandle 2-cocycle to the case of stuquandles.
\begin{definition}\label{cocycle-eqs}
Let $(X,*,R_1,R_2,R_3,R_4)$ be a stuquandle, and $A$ be an abelian group. A Boltzmann weight of $X$ is a triple of maps $\phi, \phi_1, \phi_2: X \times X \rightarrow A$ satisfying the following conditions.
\begin{enumerate}
    \item For all $x \in X$, $\phi(x,x)=0$.
    \item For all $x,y \in X$,
    \begin{align*}
        &\text{(i)}&\phi_2(y,x) +\phi(R_3(y,x),R_4(y,x) &= \phi(x,y)+\phi_2(x * y,y),\\
        &\text{(ii)}&\phi_1(x,y)+\phi(R_1(x,y),R_2(x,y)) &= \phi(x,y) + \phi_1(y, x* y).
    \end{align*}
    \item For all $x,y,z \in X$,
    \begin{align*}
        &\text{(i)}&\phi(x,y)+\phi(x*y,z) &= \phi(x,z)+\phi(x*z,y*z),\\
        &\text{(ii)}&\phi_2(y, z\,\bar{*}\,x)+\phi(R_3(y,z\,\bar{*}\, x),x)-\phi(z\,\bar{*}\, x,x) 
        &= \phi(y,x) + \phi_2(y * x, z) - \phi(R_4(y* x,z)\,\bar{*}\, x,x),\\
        &\text{(iii)}&\phi_1(z\,\bar{*}\,x,y)+\phi(R_1(z\,\bar{*}\,x,y),x)-\phi(z \,\bar{*}\,x,x)
        &= \phi(y,x) + \phi_1(z,y* x)-\phi(R_2(z,y* x)\,\bar{*}\,x,x),\\
        &\text{(iv)}&\phi(x\,\bar{*}\,R_3(y,z),z) -\phi(x\,\bar{*}\,R_3(y,z), R_3(y,z))
        &= \phi(x, R_4(y,z)) -\phi((x* R_4(y,z)) \,\bar{*}\, y,y),\\
        &\text{(v)}&\phi(x\,\bar{*}\, R_1(z,y),z) -\phi(x\,\bar{*}\, R_1(z,y), R_1(z,y)) 
        &= \phi(x, R_2(z,y))-\phi((x* R_2(z,y)) \,\bar{*} \,y,y).
    \end{align*}
\end{enumerate}
\end{definition}

\begin{remark}
Note that $\phi$ is a quandle 2-cocycle.
\end{remark}

The definition of a Boltzmann weight of a stuquandle is motivated by the oriented stuck Reidemeister moves for stuck links using the contribution rule at classical crossings and stuck crossings in Figure~\ref{crossingrule}.

\begin{figure}[h]
\centering
    \includegraphics{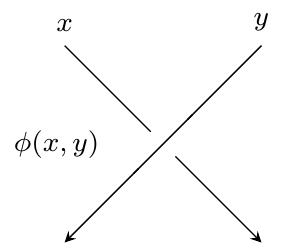}
\hspace{1cm}
    \includegraphics{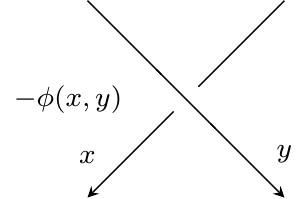}

    \includegraphics{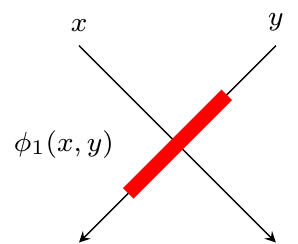}
\hspace{2cm}
    \includegraphics{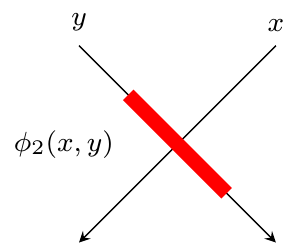}
\caption{Contribution rules for weights at stuck crossings.}
\label{crossingrule}
\end{figure}

\[
\includegraphics{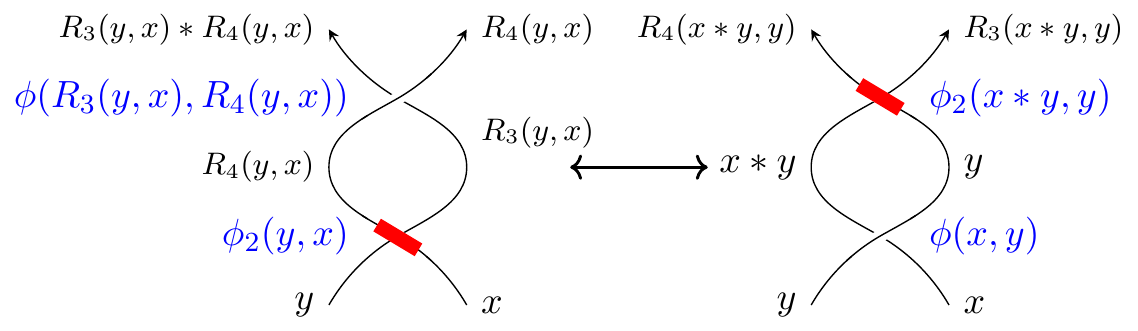}
\]
\[
\includegraphics{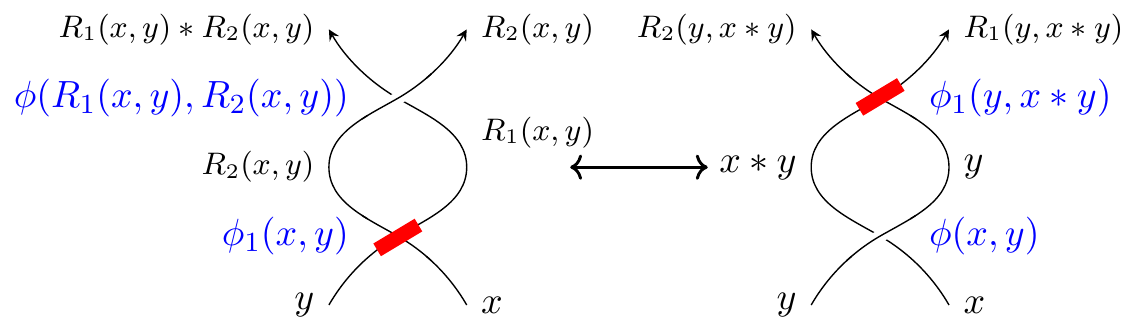}
\]
\[
\includegraphics{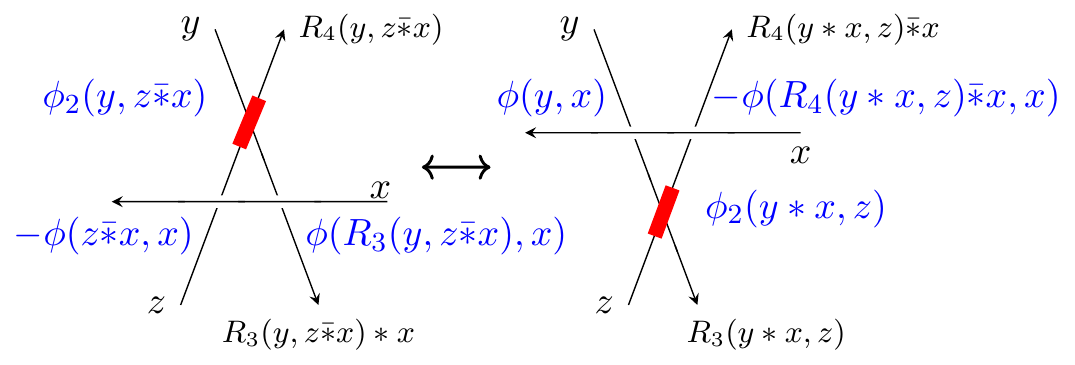}
\]
\[
\includegraphics{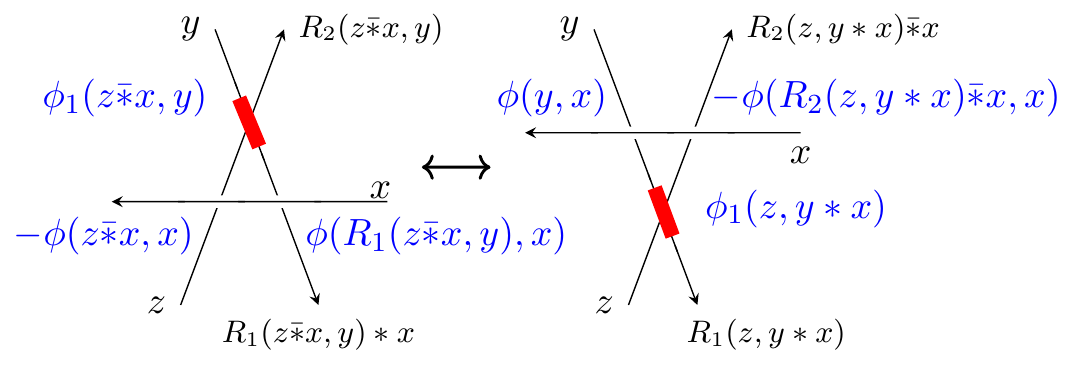}
\]
\vspace*{-8mm}
\[
\includegraphics{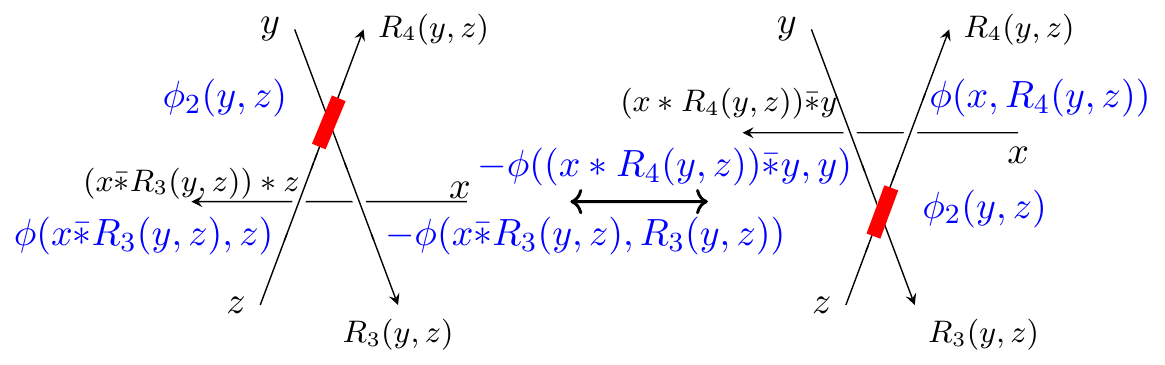}
\]
\[
\includegraphics{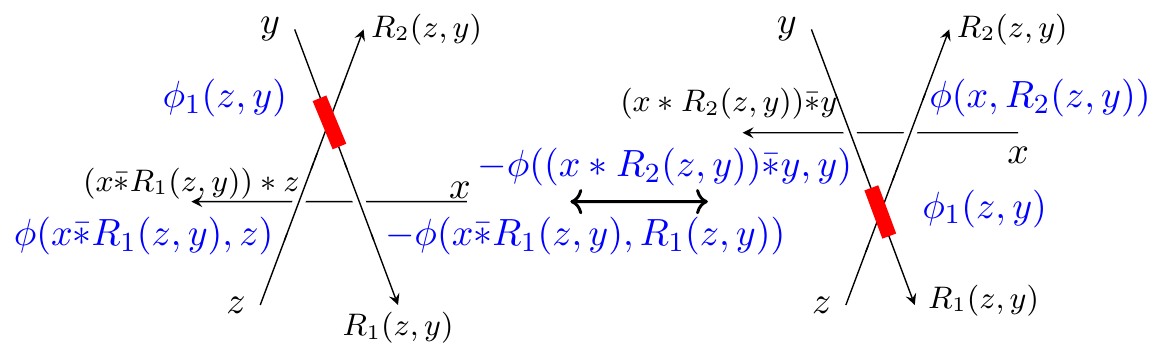}
\]

\begin{definition}\label{compatabilityrules}
Let $(X,*,R_1,R_2,R_3,R_4)$ be a stuquandle, $A$ an abelian group, and $(\phi, \phi_1, \phi_2)$ a Boltzmann weight with coefficients in $A$. We say that $\phi$ and $\phi_1$ are strongly compatible if for all $x,y,z \in X$, 
\begin{eqnarray}
    \phi_1(x,y) &=&\phi_1(y, x * y),\\
    \phi_1(z\,\bar{*}\, x,y) &=& \phi_1(z,y * x).
\end{eqnarray}
Similarly, we say that $\phi$ and $\phi_2$ are strongly compatible if for all $x,y,z \in X$
\begin{eqnarray}
    \phi_2(y,x) &=&\phi_2(x * y,y),\\
    \phi_2(y, z\,\bar{*}\, x) &=& \phi_2(y * x, z).
\end{eqnarray}

\end{definition}

\begin{definition} \label{compatibleweights} 
Let $(X,*,R_1,R_2,R_3,R_4)$ be a stuquandle, $A$ an abelian group, and $(\phi, \phi_1, \phi_2)$ a Boltzmann weight with coefficients in $A$. Let $L$ be an oriented stuck knot or link with diagram $D$. 
\begin{enumerate}
    \item For each $f \in \textup{Hom}(\mathcal{STQ}(L),X)$, we define the \emph{Boltzmann weight} of $f$, denoted by $BW(f)$, to be the sum over all the crossings in $D$ subject to the contribution rules in Figure~\ref{crossingrule}.
    \item We can define a \emph{single-variable state sum invariant} by  $$\Phi_X^{\phi,\phi_1,\phi_2}(L) = \sum_{f \in \textup{Hom}(\mathcal{STQ}(L),X) } u^{BW(f)}.$$
    \item If $\phi$ and $\phi_1$ are compatible, we define the \emph{two-variable state sum invariant} by $$\Phi_X^{\phi,\phi_1,\phi_2}(L) = \sum_{f \in \textup{Hom}(\mathcal{STQ}(L),X) } u^{BW(f)} v^{BW_{\phi_{1}}(f)}.$$
Similarly, if $\phi$ and $\phi_2$ are compatible, we define the \emph{two-variable state sum invariant} by $$\Phi_X^{\phi,\phi_1,\phi_2}(L) = \sum_{f \in \textup{Hom}(\mathcal{STQ}(L),X) } u^{BW(f)} w^{BW_{\phi_{2}}(f)}.$$
Note that the \emph{partial Boltzmann weights} denoted by $BW_{\phi_1}$ and $BW_{\phi_2}$ are the sums of $\phi_1$ contributions and $\phi_2$ contributions, respectively.
\item Lastly, if both $\phi_1$ and $\phi_2$ are compatible with $\phi$, we define the \emph{three-variable state sum invariant} by $$\Phi_X^{\phi,\phi_1,\phi_2}(L) = \sum_{f \in \textup{Hom}(\mathcal{STQ}(L),X) } u^{BW_{\phi}(f)} v^{BW_{\phi_{1}}(f)}w^{BW_{\phi_{2}}(f)}.$$
\end{enumerate}
\end{definition}
\noindent By construction, we have the following result.
\begin{theorem}
Let $X$ be a stuquandle, $A$ be an abelian group, and let $\phi, \phi_1$ and $\phi_2$ be respectively the Boltzmann weights at regular positive crossings, positive stuck crossings, and negative stuck crossings with coefficients in $A$. Then $\Phi_X^{\phi, \phi_1, \phi_2}$ is an invariant of stuck knots and links.
\end{theorem}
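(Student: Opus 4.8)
The plan is to verify invariance under each of the oriented stuck Reidemeister moves in the generating set of Figure~\ref{stuckrmoves}, exploiting the fact that the conditions defining a Boltzmann weight in Definition~\ref{cocycle-eqs} were engineered to match these moves one for one. First I would recall that the fundamental stuquandle $\mathcal{STQ}(L)$ is itself invariant under the stuck Reidemeister moves (this is how it is constructed in \cite{CEKL}), so for diagrams $D$ and $D'$ related by a single local move there is a canonical bijection between the sets $\textup{Hom}(\mathcal{STQ}(L),X)$ read off from $D$ and from $D'$: a coloring of the part of the diagram outside the local picture extends uniquely and compatibly to each side, since the stuquandle axioms of Definition~\ref{stuquandle} force the colors on the outgoing arcs to agree. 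Thus it suffices to fix a coloring $f$ and show that the local contribution to $BW(f)$ from the crossings inside the move is the same on both sides.

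Next I would proceed move by move. For the purely classical moves the analysis is the standard one: a Reidemeister~I kink contributes $\pm\phi(x,x)$, which vanishes by condition~(1); Reidemeister~II contributes a pair of terms $\phi$ and $-\phi$ on crossings of opposite sign that cancel; and Reidemeister~III is exactly condition~(3)(i), the quandle $2$-cocycle identity. For the moves in which a single stuck crossing interacts with a classical crossing or a kink, the two sides of the move produce the two sides of condition~(2)(i) or~(2)(ii), according to whether the stuck crossing is negative (governed by $\phi_2$ together with $R_3,R_4$) or positive (governed by $\phi_1$ together with $R_1,R_2$); here one reads the induced colors off the rules in Figure~\ref{newrule} and the weight contributions off Figure~\ref{crossingrule}. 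Finally, the moves in which a strand is pushed across a stuck crossing yield the longer identities~(3)(ii)--(3)(v), where the extra $\phi$ terms record precisely the contributions of the classical crossings that are created or destroyed as the strand passes the stuck bar.

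The main obstacle, and the bulk of the work, is bookkeeping rather than any deep difficulty: for each stuck move one must assign orientations correctly, read the color of every semiarc from the coloring rules, and then confirm that after cancelling the common $\phi$-contributions the remaining terms are exactly the stated cocycle equation rather than a sign- or argument-permuted variant of it. Matching the sides of each move to the correct labelled condition in Definition~\ref{cocycle-eqs}, with the right roles for $R_1,R_2$ versus $R_3,R_4$, is where care is needed. Once $BW(f)$ is shown to be unchanged under every generating move, the single-variable sum $\sum_{f} u^{BW(f)}$ is an invariant.

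To finish with the refined invariants, I would observe that under the strong compatibility hypotheses of Definition~\ref{compatabilityrules} the same move-by-move comparison shows that the partial weights $BW_{\phi_1}(f)$ and $BW_{\phi_2}(f)$ are each separately preserved: the compatibility equations are exactly what is needed to split the stuck-crossing conditions~(2) and~(3) into their $\phi_1$-part and $\phi_2$-part, so that the individual $\phi_1$- and $\phi_2$-contributions, and not merely their combination with the $\phi$-terms, balance across each move. Consequently the monomials $u^{BW(f)}v^{BW_{\phi_1}(f)}$, $u^{BW(f)}w^{BW_{\phi_2}(f)}$, and $u^{BW_{\phi}(f)}v^{BW_{\phi_1}(f)}w^{BW_{\phi_2}(f)}$ all agree term by term under each move, and the two- and three-variable state sums of Definition~\ref{compatibleweights} are invariant as well.
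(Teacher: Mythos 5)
Your proposal is correct and follows essentially the same route as the paper's own (much terser) proof: a bijection between colorings before and after each generating stuck Reidemeister move, followed by matching the local weight contributions against the conditions of Definition~\ref{cocycle-eqs}. Your additional remarks on how strong compatibility makes the partial weights $BW_{\phi_1}$ and $BW_{\phi_2}$ separately invariant go slightly beyond what the paper writes out, but are consistent with its construction.
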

\begin{proof}
As in the case of classical and singular knot theories, there is a one-to-one correspondence between colorings before and after each of the stuck Reidemeister moves in the generating set in Figure \ref{stuckrmoves}.  The invariance follows directly from the equations involving $\phi$, $\phi_1$, and $\phi_2$ in Definition~\ref{cocycle-eqs}. 
\end{proof}


\section{Computations for Stuck Knots and Links}\label{CompStuck}

This section collects a few examples and computations of the state sum invariant. The examples in this section were computed using our custom \texttt{Python} code.
\begin{example}
Let $X = \Z_{4}$ with the operations $*,\bar{*}, R_1, R_2, R_3, R_4: X\times X \to X$ and $\phi, \phi_1, \phi_2: X \times X \to \Z_{4}$ defined below.
\begin{align*}
    x*y &= 3x+2y &\phi\,(x,y) &= 2xy+2y\\
    x\,\bar{*}\,y &=3x+2y & \phi_1(x,y) &= 2x+2y\\
    R_1(x,y) & = x+2y^2 &     \phi_2(x,y) & = 2x\\
    R_2(x,y) &= 2x^2+y \\
    R_3(x,y) &= 3x \\
    R_4(x,y) &= 2x+y
\end{align*}
\begin{figure}[h]
    \begin{minipage}{0.48\textwidth}
    \centering
\includegraphics{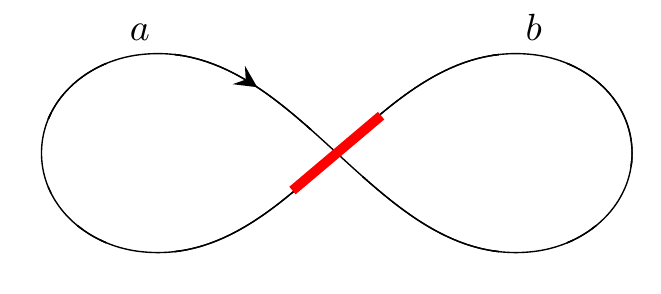}
\caption{ The stuck knot infinity $0^{k+}_1$}
\label{stuckinfinity}
    \end{minipage}
    \begin{minipage}{0.48\textwidth}
    \centering
\includegraphics{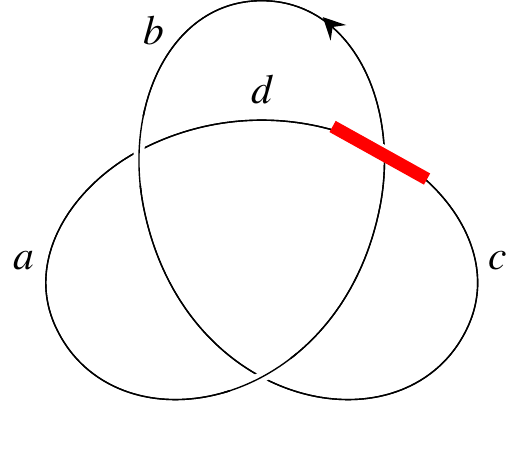}
    \caption{ The stuck trefoil $2^{k-}_1$}
    \label{stucktrefoil}
    \end{minipage}
\end{figure}
Given the stuquandle defined above, the colorings for Figure \ref{stuckinfinity} are $(0,0),(0,2),(2,0),(2,2)$, and the colorings for Figure \ref{stucktrefoil} are $(0, 0, 0, 0)$, $(1, 3, 3, 1), (2, 2, 2, 2), (3, 1, 1, 3)$. Thus, the coloring invariant for both knots is $4$. However, if we consider the state sum invariant with Boltzmann weights $\phi, \phi_1, \phi_2$, then
\[\Phi_X^{\phi,\phi_1, \phi_2}(0^{k+}_1) = 4\]
and
\[\Phi_X^{\phi,\phi_1, \phi_2}(2^{k-}_1) = 2u^2+2.\]
Therefore, the state sum invariant can distinguish the knots.
\end{example}

Using our custom \texttt{Python} code, we computed Boltzmann weights for specific stuck knots and links based on singular knots and links known as two-bouquet graphs of type $K$ in \cite{O} with both a positive stuck crossing($L^{k+}$) and negative stuck crossing($L^{k-}$) variation.\par

\begin{example} Let the stuquandle $X = \Z_4$ with operations $*,\bar{*}, R_1, R_2, R_3, R_4: X\times X \to X$ and $\phi, \phi_1, \phi_2: X \times X \to A=\Z_{4}$ defined below.
\begin{align*}
    x*y &= x    &  \phi\,(x,y) &= y^2+xy+2x\\
    x\,\bar{*}\,y &= x  &     \phi_1(x,y) &= x^2+2y\\
    R_1(x,y) & = 2x+3y  &     \phi_2(x,y) & = 2x+2y \\
    R_2(x,y) &= 3x+2y \\
    R_3(x,y) &=  y\\
    R_4(x,y) &= x
\end{align*}
The results are collected in the table:\\

\centering
\setlength{\tabcolsep}{8pt} 
\renewcommand{\arraystretch}{1.25} 
\begin{tabular}{c|c|c}
    $Col_X(L)$ & $\Phi_X^{\phi,\phi_1,\phi_2}(L)$  & $L$\\
    \hline
    $4$ & $4$ & $1^{l-}_1$, $3^{l-}_1$, $4^{l-}_1$, $5^{l-}_1$ \\
     & $2u^3+2$ & $0^{k+}_1$,$2^{k+}_1$, $3^{k+}_1$, $4^{k+}_1$ \\
    \hline
    $8$ 
     & $2u^3+2u+4$ & $1^{l+}_1$, $3^{l+}_1$, $4^{l+}_1$, $5^{l+}_1$ \\
    \hline
    $16$ & $16$ & $4^{k-}_1$\\
     & $8u^2+8$ & $0^{k-}_1$\\
     & $8u^3+8$ & $2^{k-}_1$, $3^{k-}_1$.
\end{tabular}
\end{example}

Using the compatibility rules from Definition \ref{compatabilityrules} and the state sum invariant defined in Definition \ref{compatibleweights}, we can distinguish most of oriented stuck knots derived from the oriented left-handed trefoil. 

\begin{example} Let the stuquandle $X = \Z_8$ with operations $*,\bar{*}, R_1, R_2, R_3, R_4: X\times X \to X$ and $\phi: X \times X \to \Z_{4}, \phi_1: X \times X \to \Z_{4}, \phi_2: X \times X \to \Z_{4}$ defined below.
\begin{align*}
    x*y &= 5x+4y    &  \phi\,(x,y) &= 2x+2y\\
    x\,\bar{*}\,y &= 5x+4y  &     \phi_1(x,y) &= 3x+3y\\
    R_1(x,y) & = y  &     \phi_2(x,y) & = xy+x+y \\
    R_2(x,y) &= 5x+4y \\
    R_3(x,y) &=  x\\
    R_4(x,y) &= 4x+5y
\end{align*}
Notice that both $\phi_1$ and $\phi_2$ are compatible with $\phi$. The results are collected in the table:\\

\centering
\begin{tabular}{c|c|l}
    $Col_X(L)$ & $\Phi_X^{\phi,\phi_1,\phi_2}(L)$  & \hspace{.5cm}$L$\\
    \hline
     & $4$ & \parbox[c]{1em} {\resizebox {3cm} {!} {
\includegraphics{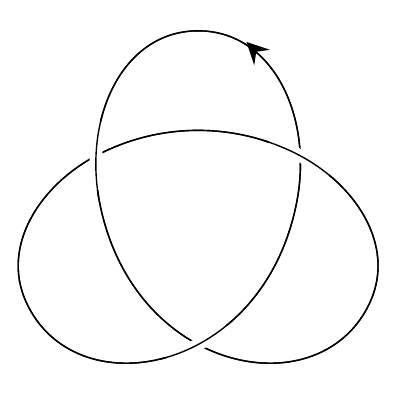}
\includegraphics{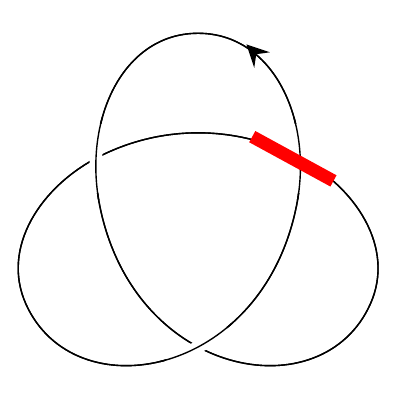}
    }}\\
     & $2v+2$ &  \parbox[c]{1em} {\resizebox {1.5cm} {!} {
\includegraphics{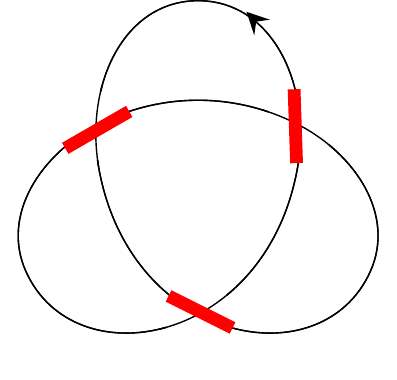}
    }}  \\
     & $2v^2+2$ & \parbox[c]{1em} {\resizebox {3cm} {!} {
\includegraphics{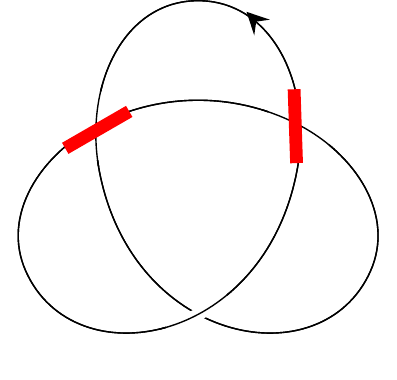}
\includegraphics{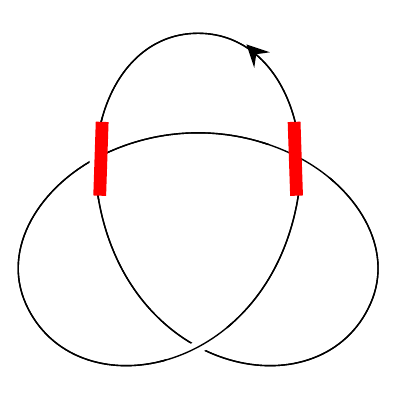}
    }} \\ 
     & $2v^3+2$ &  \parbox[c]{1em}{\resizebox {1.5cm} {!} {
\includegraphics{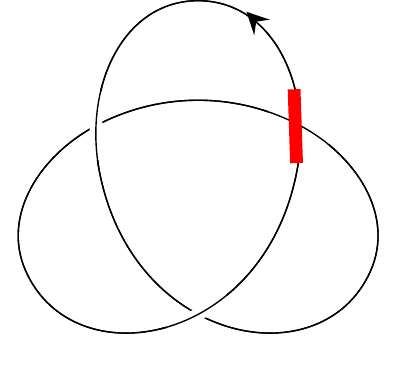}
     }}\\
    $4$ & $2v^2w^3+2$ &  \parbox[c]{1em} {\resizebox {1.5cm} {!} {
\includegraphics{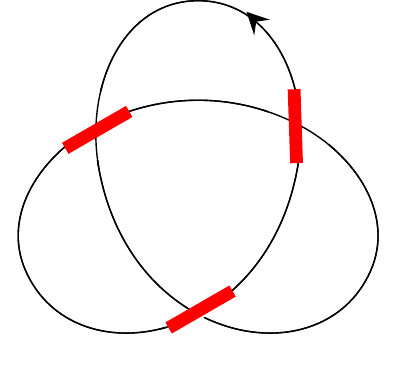}
    }} \\
     & $2v^3w^2+2$ &  \parbox[c]{1em} {\resizebox {1.5cm} {!} {
\includegraphics{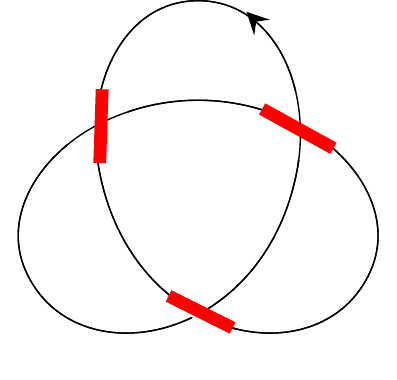}
    }} \\
     & $2v^3w^3 + 2$ &  \parbox[c]{1em}{\resizebox {1.5cm} {!} {     
\includegraphics{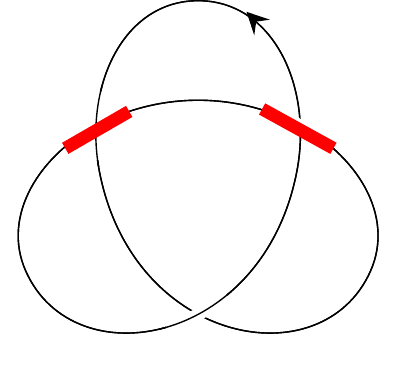}
    }} \\
     & $2w+2$ &  \parbox[c]{1em} {\resizebox {1.5cm} {!} {
\includegraphics{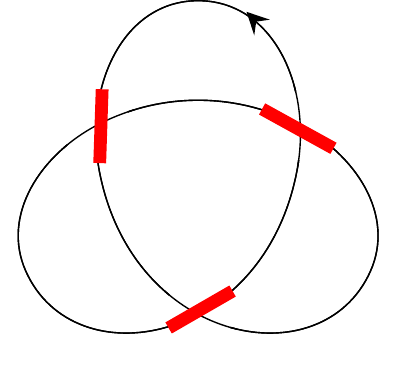}
    }}  \\
     & $2w^2+2$ &  \parbox[c]{1em}{\resizebox {1.5cm} {!} {
\includegraphics{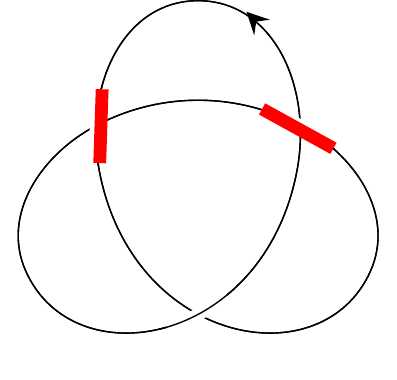}
    }}\\
\end{tabular}
\end{example}

\section{Computations for RNA Foldings}\label{CompArcDiagram} 
In this section, we compute the counting and state sum invariants of arc diagrams of RNA foldings under the transformation, $T$, defined in Section \ref{arcdiagrams}. In the following example, we use the stuck knots in Figure \ref{arcdiagramknots} derived from the arc diagrams \textbf{i}-\textbf{iv} and $\mathbf{1}$-$\mathbf{10}$ in \cite{KM} on pages 347 and 353 respectively. 
The examples in this section were computed using our custom \texttt{Python} code.

\begin{figure}[h!]
\includegraphics{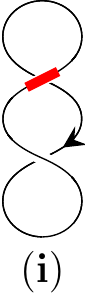}
\includegraphics{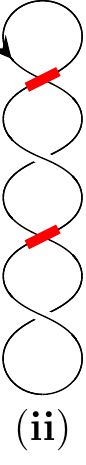}
\includegraphics{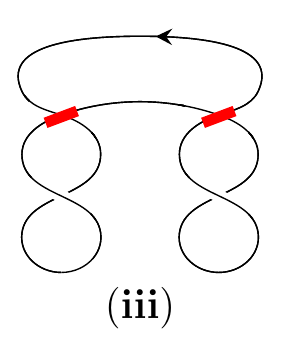}
\includegraphics{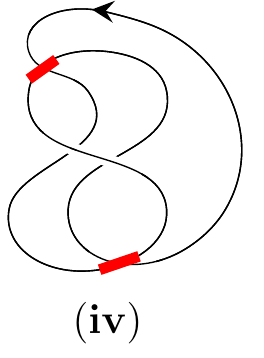}
\includegraphics{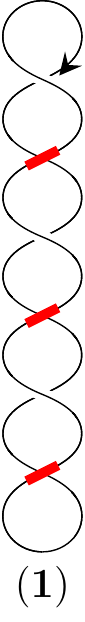}
\includegraphics{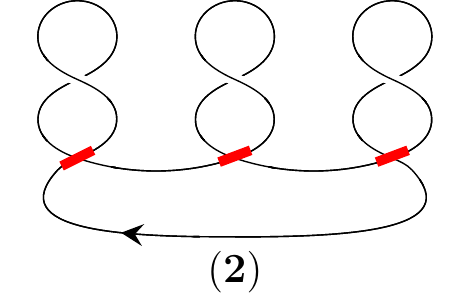}
\includegraphics{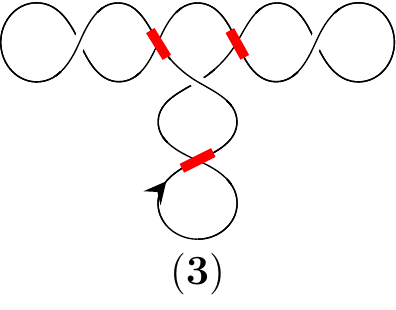}
\includegraphics{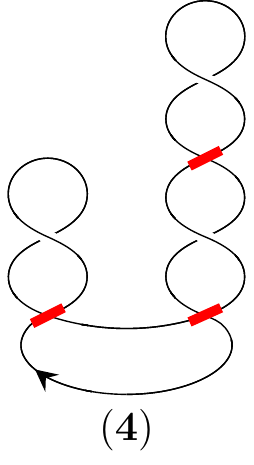}
\includegraphics{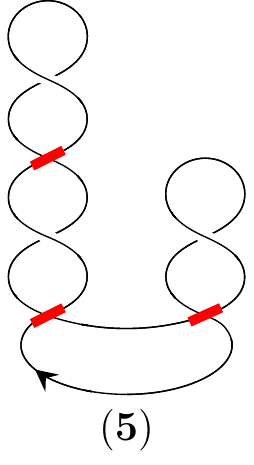}
\includegraphics{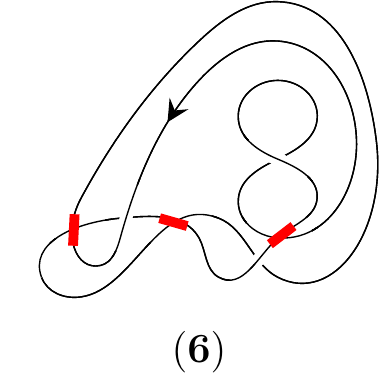}
\includegraphics{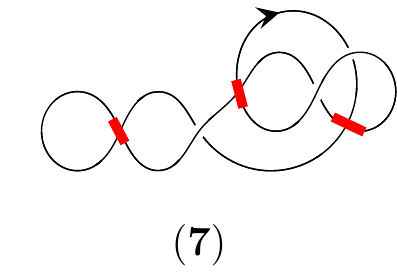}
\includegraphics{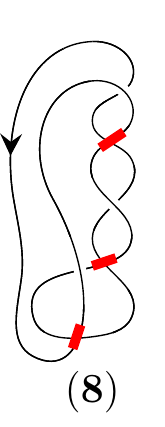}
\includegraphics{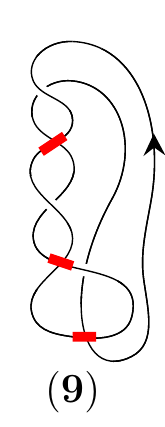}
\includegraphics{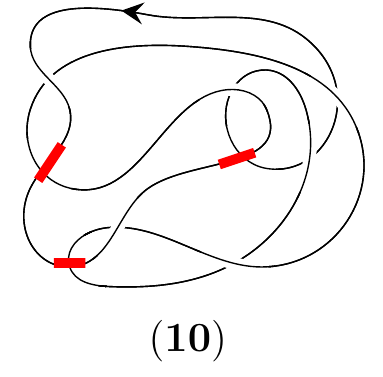}
\caption{Arc Diagrams of RNA foldings}
\label{arcdiagramknots}
\end{figure}

\begin{example}
Let $X_1 = \Z_3$ with operations $*,\bar{*}, R_1, R_2, R_3, R_4:X_1\times X_1 \to X_1$ and $\phi,\phi_1: X_1 \times X_1 \to \Z_3$, $\phi_2: X_1 \times X_1 \to \Z_3$ defined below.
\begin{align*}
    x*y &= x    &  \phi\,(x,y) &= x+2y\\
    x\,\bar{*}\,y &= x  &     \phi_1(x,y) &= y\\
    R_1(x,y) & = y  &     \phi_2(x,y) & = xy+x+y \\
    R_2(x,y) &= x \\
    R_3(x,y) &=  y\\
    R_4(x,y) &= x
\end{align*}
\noindent Let $X_2 =  \Z_4$ with operations $*',\bar{*}', R'_1, R'_2, R'_3, R'_4:X_2\times X_2 \to X_2$ and $\phi',\phi'_1: X_2 \times X_2 \to \Z_4$, $\phi'_2: X_2 \times X_2 \to \Z_4$ defined below.
\begin{align*}
    x*'y &= x    &  \phi'\,(x,y) &= x+3y\\
    x\,\bar{*}'\,y &= x  &     \phi'_1(x,y) &= 2y\\
    R'_1(x,y) & = y  &     \phi'_2(x,y) & = xy+x+y \\
    R'_2(x,y) &= x \\
    R'_3(x,y) &=  y\\
    R'_4(x,y) &= x
\end{align*}
The results are collected in the table:\\

\centering
\setlength{\tabcolsep}{8pt}
\renewcommand{\arraystretch}{1.5} 
\begin{tabular}{c|c|c}
    $\left(Col_{X_1}(L), Col_{X_2}(L)\right)$ & $\left(\Phi_{X_1}^{\phi,\phi_1,\phi_2}(L), \Phi_{X_2}^{\phi',\phi'_1,\phi'_2}(L)\right)$  & $L$\\
    \hline
    $(3, 4)$ & $(w+2, 2w^2+2)$ & \emph{(iv)} \\
    \hline
    $(9, 16)$ & $(5w^2+2w+2, 8w^3 + 2w^2 + 4w + 2)$ & \emph{(i)}\\
     & $(9, 4w^3+2w^2+8w+2)$ & \emph{(10)}\\
     & $(2w^2+2w+5, 4w^2+8w+4)$ & \emph{(8)}, \emph{(9)} \\
     & $(2w^2+2w+5, 4w^3+2w^2+8w+2)$ & \emph{(6)}, \emph{(7)} \\
    \hline
    $(27,64)$ & $(6w^2+15w+6, 8w^3+32w^2+8w+16)$ & \emph{(ii)}, \emph{(iii)}\\
    \hline
    $(81, 256)$ & $(18w^2+18w+45, 64w^3+32w^2+128w+32)$ & \emph{(2)}, \emph{(3)} \\
     & $(24w^2+24w+33, 72w^3+48w^2+88w+48)$ & \emph{(1)}, \emph{(4)}, \emph{(5)}.
\end{tabular}
\end{example}

\section*{Acknowledgement} 
Mohamed Elhamdadi was partially supported by Simons Foundation collaboration grant 712462.


\bibliography{Refs}
\bibliographystyle{plain}

\end{document}